\newcommand{\F}{\mathcal{F}}
\newcommand{\R}{\mathbb{R}}
\newcommand{\BR}{\bar{\mathbb{R}}}
\newcommand{\px}{\bar x}
\newcommand{\pv}{{\bar x}^*}
\newcommand{\inner}[2]{\langle{#1},{#2}\rangle}
\newcommand{\Inner}[2]{\left\langle{#1},{#2}\right\rangle}
\newcommand{\norm}[1]{\|#1\|}
\newcommand{\normq}[1]{ {\|#1\|}^2 }
\newcommand{\dl}[1]{ {#1}^* }
\newcommand{\ddl}[1]{ {#1}^{**} }
\newcommand{\tos}{\rightrightarrows}
\newtheorem{theorem}{Theorem}[section]
\newtheorem{lemma}[theorem]{Lemma}
\newtheorem{corollary}[theorem]{Corollary}
\newtheorem{proposition}[theorem]{Proposition}
\title{Br\o nsted-Rockafellar property and maximality of monotone
  operators representable by convex functions in non-reflexive Banach
  spaces
\\
{\small published on:
  \href{http://www.heldermann.de/JCA/JCA15/JCA153/jca15033.htm}{
         {\it J.\ Convex Anal.\ } {\bf 15} (2008).
         }
}
}
\author{ M. Marques Alves\thanks{IMPA, Estrada Dona Castorina 110, 22460-320
    Rio de Janeiro, Brazil
   ({\tt maicon@impa.br})}\hspace{.5em}\thanks{Partially supported by Brazilian CNPq
scholarship.}
  \and
  B. F. Svaiter\thanks{ IMPA, Estrada Dona Castorina 110, 22460-320 Rio de
    Janeiro, Brazil ({\tt benar@impa.br}) }\hspace{.5em}
    \thanks{Partially supported by CNPq
    grants 300755/2005-8, 475647/2006-8 and by PRONEX-Optimization}
}
\date{}
\begin{document}

\maketitle

\begin{abstract}
  In this work we are concerned with maximality of monotone operators
  representable by certain convex functions in non-reflexive Banach
  spaces.  We also prove that these maximal monotone operators satisfy
  a Br\o nsted-Rockafellar type property.
  \\
  \\
  2000 Mathematics Subject Classification: 47H05, 49J52, 47N10.
  \\
  \\
  Key words: Convex function, maximal monotone operator, Br\o
  nsted-Rockafellar property .
  \\
\end{abstract}

\pagestyle{plain}

\section{Introduction}

Let $X$ be a real Banach space. We use the notation $X^*$ for the
topological dual of $X$ and $\langle \cdot, \cdot \rangle$ stands for
both duality products in $X\times X^*$ and $X^*\times X^{**}$,
\[ 
\langle x,x^*\rangle=x^*(x), \quad \langle x^*,x^{**}\rangle=x^{**}(x^*),
\quad x\in X,\; x^*\in X^*,\; x^{**}\in X^{**}.
\]
A point to set operator $T:X\tos X^*$ is a relation on $X$ to $X^*$:
\[ T\subset X\times X^*\] and $x^*\in T(x)$ means $(x,x^*)\in T$. An
operator $T:X\tos X^*$ is \emph{monotone} if
\[
\langle x-y,x^*-y^*\rangle\geq 0,\qquad \forall (x,x^*),(y,y^*) \in T.
\]
The operator $T$ is \emph{maximal monotone} if it is monotone and
maximal in the family of monotone operators of $X$ into $X^*$ (with
respect to order of inclusion).

\par In~\cite{Fitz88} Fitzpatrick has put in light the possibility to
represent maximal monotone operators by convex functions on $X\times
X^*$.  Before that, Krauss~\cite{Krauss85} managed to represent
maximal monotone operators by subdifferentials of saddle
functions. Fitzpatrick's approach was constructive: Given a maximal
monotone operator $T:X\tos X^*$, he has defined the lower
semicontinuous convex function $\varphi_T: X\times X^*\to \BR$ as
\begin{equation} \label{FitzIntro}
 \varphi_{T}(x, x^*) = \sup_{(y,
      y^*) \in T} \langle x - y, y^* - x^* \rangle + \langle x, x^*\rangle.
\end{equation}
Follows directly from maximal monotonicity of $T$, that $\varphi_T$
majorizes the duality product on $X\times X^*$.  On the other hand,
$\varphi_T$ is equal to the duality product in the graph of $T$. In
this sense, it is said that $\varphi_T$ is a convex representation of
$T$ or the {\it Fitzpatrick function} of $T$. It was also
proved~\cite{Fitz88} that $\varphi_T$ is the smallest function in the
family of lower semicontinuous convex functions on $X\times X^*$ which
have the above proprieties:
\begin{theorem}[\mbox{\cite[Theorem 3.10]{Fitz88}}] \label{th:fitz} If
  $T$ is a maximal monotone operator on a real Banach space $X$, then
  \eqref{FitzIntro}
  is the smallest element of the Fitzpatrick family $\F_T$,
  \begin{equation} \label{eq:def.ft}
    \F_T=\left\{ h\in \BR^{X\times X^*}
      \left|
      \begin{array}{ll}
        h\mbox{ is convex and lower semicontinuous}\\
        \inner{x}{x^*}\leq h(x,x^*),\quad \forall (x,x^*)\in X\times X^*\\
        (x,x^*)\in T 
        \Rightarrow 
        h(x,x^*) = \inner{x}{x^*}
       \end{array}
       \right.
      \right\}
    \end{equation}
    Moreover, for any $h\in \F_T$,
    \[
    (x,x^*)\in T\iff    h(x,x^*)=\inner{x}{x^*}.
    \]
\end{theorem}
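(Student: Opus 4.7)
The plan is to prove three things in order: (i) that $\varphi_T$ belongs to $\F_T$; (ii) that every $h\in\F_T$ dominates $\varphi_T$ pointwise; and (iii) that for any $h\in\F_T$ the equivalence $(x,x^*)\in T \iff h(x,x^*)=\inner{x}{x^*}$ holds. Items (i) and (iii) are essentially bookkeeping, so the real work sits in (ii).

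For (i), rewrite each summand in the supremum as $\inner{x-y}{y^*-x^*}+\inner{x}{x^*}=\inner{x}{y^*}+\inner{y}{x^*}-\inner{y}{y^*}$; this is affine and continuous in $(x,x^*)$ for each fixed $(y,y^*)$, so $\varphi_T$ is convex and lower semicontinuous as a supremum of such functions. If $(x,x^*)\in T$, monotonicity of $T$ forces every term in the sup to be non-positive and the value $0$ is attained at $(y,y^*)=(x,x^*)$, giving $\varphi_T(x,x^*)=\inner{x}{x^*}$. If $(x,x^*)\notin T$, maximality supplies some $(y,y^*)\in T$ with $\inner{x-y}{x^*-y^*}<0$, making the corresponding term strictly positive so that $\varphi_T(x,x^*)>\inner{x}{x^*}$. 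Together these verify the three requirements in \eqref{eq:def.ft}.

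For (ii), fix $h\in\F_T$, an arbitrary $(x,x^*)$, and $(y,y^*)\in T$; the strategy is to exploit convexity of $h$ along the segment from $(y,y^*)$ to $(x,x^*)$. For $\lambda\in(0,1)$,
\[
h\bigl(\lambda(x,x^*)+(1-\lambda)(y,y^*)\bigr) \le \lambda\, h(x,x^*) + (1-\lambda)\inner{y}{y^*},
\]
using $h(y,y^*)=\inner{y}{y^*}$. At the same time, $h\ge \inner{\cdot}{\cdot}$ bounds the left-hand side below by $\inner{\lambda x+(1-\lambda)y}{\lambda x^*+(1-\lambda)y^*}$, a quadratic polynomial in $\lambda$. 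Subtracting, dividing by $\lambda>0$, and letting $\lambda\downarrow 0$ erases the remaining $\lambda$-terms and yields $h(x,x^*)\ge \inner{x-y}{y^*-x^*}+\inner{x}{x^*}$. Taking the supremum over $(y,y^*)\in T$ gives $\varphi_T\le h$. Arranging this limit argument cleanly is the one substantive step; the rest is algebra.

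For (iii), the implication $(x,x^*)\in T \Rightarrow h(x,x^*)=\inner{x}{x^*}$ is built into $\F_T$. Conversely, if $h(x,x^*)=\inner{x}{x^*}$, the sandwich $\inner{x}{x^*}\le \varphi_T(x,x^*)\le h(x,x^*)=\inner{x}{x^*}$ coming from (i) and (ii) gives $\varphi_T(x,x^*)=\inner{x}{x^*}$. By the very definition of $\varphi_T$, this forces $\inner{x-y}{x^*-y^*}\ge 0$ for every $(y,y^*)\in T$, and maximality of $T$ then places $(x,x^*)$ in $T$.
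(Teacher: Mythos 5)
The paper states this result with a citation to Fitzpatrick's original work \cite{Fitz88} and does not supply a proof of its own, so there is no in-paper argument to compare against; your proof, however, is correct and complete. The convexity-along-a-segment computation in part (ii) is the substantive step, and it works: combining $h\geq\pi$ with $h(y,y^*)=\inner{y}{y^*}$ along the segment from $(y,y^*)$ to $(x,x^*)$, expanding the quadratic, dividing by $\lambda>0$, and letting $\lambda\downarrow 0$ indeed yields $h(x,x^*)\geq\inner{x}{y^*}+\inner{y}{x^*}-\inner{y}{y^*}=\inner{x-y}{y^*-x^*}+\inner{x}{x^*}$, with the $h(x,x^*)=+\infty$ case trivially covered; taking the supremum over $(y,y^*)\in T$ gives $h\geq\varphi_T$. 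Parts (i) and (iii) are likewise sound: a supremum of affine continuous functions is convex and lower semicontinuous, maximality of $T$ produces the witness giving $\varphi_T>\pi$ off the graph, and the sandwich $\pi\leq\varphi_T\leq h$ together with maximality closes the equivalence. This is essentially the standard route to Fitzpatrick's minimality statement; an alternative seen in the literature deduces (ii) via Fenchel conjugation and the observation that $(y^*,y)\in\partial h(y,y^*)$ whenever $h(y,y^*)=\inner{y}{y^*}$, but your segment argument avoids conjugation and is, if anything, more elementary.
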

Note that \emph{any} $h\in \F_T$ fully characterizes $T$. Fitzpatrick
family of convex representations of a maximal monotone operator was
recently rediscovered by Burachik and Svaiter \cite{BuSvSet02} and
Martinez-Legaz and Th\'era \cite{Mar-LegThJNCA01}. Since then, this
subject has been object of intense research~\cite{BuSvSet02,
  SvFixProc03, BuSvProc03, Mar-LegSvSet05, BorJCA06, BorJCAMem06,
  SimonsJCA06, ReiSimonsProc05}.

In~\cite{BuSvSet02}, Burachik and Svaiter also proved that this family has a biggest element:
\begin{proposition}
  \label{pr:bigest}
  Let $T$ be a maximal monotone operator on a real Banach space $X$. There exists a (unique) maximum element 
  $\sigma_T\in\F_T$,
  \[
  \sigma_T=\sup_{h\in\F_T}\;\{h\},
  \]
  which satisfies 
  \[
   \varphi_T^*(x^*, x)=\sigma_T(x, x^*), \quad \sigma_T^*(x^*, x)=\varphi_T(x, x^*).
  \]
  Moreover, $\sigma_T$ can be characterized as
  \[
   \sigma_T(x, x^*) =\emph{clconv}(\pi + \delta_T)(x, x^*),
  \]
where $\pi$ denotes the duality product on $X\times X^*$ and $\delta_T$ is the indicator function of $T$.
\end{proposition}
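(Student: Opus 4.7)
The plan has three steps matching the three parts of the statement. First, I would show that $\sigma_T:=\sup_{h\in\F_T}h$ lies in $\F_T$ and is therefore the maximum: the pointwise supremum of convex lower semicontinuous functions is convex lsc, and the conditions $h\geq\pi$ and $h=\pi$ on $T$ pass through the supremum. For the identification $\sigma_T=\mathrm{clconv}(\pi+\delta_T)$, one direction is direct: every $h\in\F_T$ satisfies $h\leq\pi+\delta_T$ (they agree on $T$, and $\pi+\delta_T=+\infty$ off $T$), and being convex lsc, $h\leq\mathrm{clconv}(\pi+\delta_T)$; passing to the supremum gives $\sigma_T\leq\mathrm{clconv}(\pi+\delta_T)$.

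The reverse inequality would come from showing $\mathrm{clconv}(\pi+\delta_T)\in\F_T$, which reduces to the lower bound $\mathrm{clconv}(\pi+\delta_T)\geq\pi$. I would introduce the candidate $g(x,x^*):=\varphi_T^*(x^*,x)$ (using the embedding $X\hookrightarrow X^{**}$) and check directly that $g\in\F_T$. Convexity and lsc descend from $\varphi_T^*$; testing the supremum defining $g$ at $(u,u^*)\in T$ (where $\varphi_T=\pi$) gives $g\geq\varphi_T\geq\pi$; and for $(x,x^*)\in T$, the rearrangement $\varphi_T(u,u^*)\geq\langle u,x^*\rangle+\langle x,u^*\rangle-\langle x,x^*\rangle$ (apply Fitzpatrick's defining supremum with the choice $(y,y^*)=(x,x^*)\in T$) yields $g(x,x^*)\leq\langle x,x^*\rangle$. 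Since $g$ arises as the biconjugate supremum of $\pi+\delta_T$ restricted to pairs $(u^*,\hat u)\in X^*\times X\subset X^*\times X^{**}$, we have $g\leq\mathrm{clconv}(\pi+\delta_T)$, so $\pi\leq g\leq\mathrm{clconv}(\pi+\delta_T)$ completes the verification that $\mathrm{clconv}(\pi+\delta_T)\in\F_T$.

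The conjugate formulas then follow from the representation $\sigma_T=(\pi+\delta_T)^{**}|_{X\times X^*}$. Biconjugation gives $\sigma_T^*=(\pi+\delta_T)^*$ on $X^*\times X^{**}$, and the algebraic identity $\langle y,x^*\rangle+\langle x,y^*\rangle-\langle y,y^*\rangle=\langle x-y,y^*-x^*\rangle+\langle x,x^*\rangle$ identifies the restriction $\sigma_T^*(x^*,x)$ with Fitzpatrick's formula for $\varphi_T(x,x^*)$. The companion identity $\varphi_T^*(x^*,x)=\sigma_T(x,x^*)$ is the subtle one: the bound $g\leq\sigma_T$ is immediate from $g\in\F_T$, but the reverse $\sigma_T\leq g$ requires showing that the biconjugate supremum over $(u^*,u^{**})\in X^*\times X^{**}$ is already attained over $(u^*,\hat u)\in X^*\times X$. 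I expect this to be the main obstacle, to be handled by a weak-$*$ density argument based on Goldstine's theorem together with the weak-$*$ lower semicontinuity of $(\pi+\delta_T)^*$ (a supremum of weak-$*$ continuous affine functionals indexed by $T$) and the weak-$*$ continuity of the linear functional $u^{**}\mapsto\langle x^*,u^{**}\rangle$ for $x^*\in X^*$.
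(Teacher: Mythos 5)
This proposition is quoted from \cite{BuSvSet02}; the present paper contains no proof of it, so there is no in-paper argument to compare against. Evaluating your attempt on its own terms: the first two blocks are sound. The closure of $\F_T$ under pointwise supremum gives $\sigma_T\in\F_T$; the inequality $\sigma_T\leq\mathrm{clconv}(\pi+\delta_T)$ is correctly derived from $h\leq\pi+\delta_T$; and the chain $\pi\leq g\leq\mathrm{clconv}(\pi+\delta_T)$, where $g(x,x^*)=\varphi_T^*(x^*,\hat x)$, does show $\mathrm{clconv}(\pi+\delta_T)\in\F_T$, giving the identification $\sigma_T=\mathrm{clconv}(\pi+\delta_T)$. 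The computation $\sigma_T^*(x^*,\hat x)=(\pi+\delta_T)^*(x^*,\hat x)=\varphi_T(x,x^*)$ also goes through.

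The gap is exactly where you flagged it, but the remedy you propose does not close it. You reduce $\varphi_T^*(x^*,\hat x)\geq\sigma_T(x,x^*)$ to showing that, for fixed $u^*$, the supremum over $u^{**}\in X^{**}$ of
\[
u^{**}\longmapsto \langle x^*,u^{**}\rangle-(\pi+\delta_T)^*(u^*,u^{**})
\]
is already achieved (in value) over $u^{**}\in\hat X$, and invoke Goldstine's theorem plus the weak-$*$ lower semicontinuity of $(\pi+\delta_T)^*$. But look at the signs: $(\pi+\delta_T)^*(u^*,\cdot)$ is a supremum of weak-$*$ continuous affine functionals, hence weak-$*$ lower semicontinuous, so $-(\pi+\delta_T)^*(u^*,\cdot)$ is weak-$*$ \emph{upper} semicontinuous; adding the weak-$*$ continuous term $\langle x^*,\cdot\rangle$ keeps it upper semicontinuous. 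Density arguments do preserve suprema over dense subsets for \emph{lower} semicontinuous functions (given any $\lambda$ below the value at a point, there is a whole neighborhood on which the function exceeds $\lambda$, and the dense set meets it), but they fail for upper semicontinuous ones: a usc function can have a strict peak at a point not in the dense set (consider the indicator of a single point). So Goldstine plus weak-$*$ lsc of the conjugate is oriented the wrong way and cannot yield $\sup_{X^{**}}=\sup_{\hat X}$. To establish $\varphi_T^*(x^*,\hat x)\geq\sigma_T(x,x^*)$ in a non-reflexive space you need a different mechanism that exploits the special structure of $\varphi_T$ (namely that it is a supremum of continuous affine functions whose ``slopes'' already lie in $X^*\times\hat X$, so that the restricted biconjugation $h\mapsto h^*(\cdot^*,\hat\cdot)$ fixes $\varphi_T$) rather than a generic topological density argument on the bidual.
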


Beside that, a complete study of the epigraphical structure of the function $\sigma_T$ is also presented in~\cite{BuSvSet02} and it is proved that $\F_T$ is invariant under a suitable generalized conjugation operator.

Such invariance can be expressed as: If $T:X\tos X^*$ is maximal monotone and $h \in \F_T$, then
\begin{equation} \label{eq:hhstar}
 \begin{array}{lcl}
     h\,(x,x^*)&\geq& \inner{x}{x^*},\\ 
     h^*(x^*,x)&\geq &\inner{x}{x^*},  
   \end{array}
\end{equation}
for all $(x, x^*) \in X\times X^*$.

Condition~\eqref{eq:hhstar} was proved~\cite{BuSvProc03} to be not only a necessary condition but also a sufficient condition for maximal monotonicity in a reflexive Banach space.
\begin{theorem}[\mbox{\cite[Theorem 3.1]{BuSvProc03}}]
 \label{th:BuSv} 

 Let $X$ be a reflexive Banach space. If $h:X\times X^*\to\BR $
 is proper, convex, lower semicontinuous and 
  \[
  \begin{array}{lcl}
     h\,(x,x^*)&\geq& \inner{x}{x^*} ,\quad \forall (x,x^*)\in X\times X^*\\
     h^*(x^*,x)&\geq &\inner{x}{x^*} , \quad \forall (x,x^*)\in X\times X^*
   \end{array}
   \]
   then the operator $T:X\tos X^*$ defined as
   \[ T=\{(x,x^*)\in X\times X^*\;|\;  h\,(x,x^*)=
   \inner{x}{x^*}\}
   \]
  is maximal monotone and $T=\{(x,x^*)\in X\times X^*\;|\;
  h^*\,(x^*,x)= \inner{x}{x^*}\}$.
\end{theorem}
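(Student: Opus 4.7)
\medskip
\noindent\textbf{Proof proposal.}
The plan is: handle monotonicity by convexity; reduce maximality by translation to showing $(0,0)\in T$ under monotone relatedness; produce a point of $T$ by Fenchel duality (Attouch--Br\'ezis) that must coincide with $(0,0)$; and finally deduce the equality $T=\{(x,x^*):h^*(x^*,x)=\inner{x}{x^*}\}$ from Fenchel--Young together with maximality. Monotonicity uses only $h\geq\pi$ and convexity: for $(x,x^*),(y,y^*)\in T$ and $\lambda\in(0,1)$, applying $\pi\leq h$ at the convex combination, bounding $h$ above by convexity, and using the two defining equalities yields $\lambda(1-\lambda)\inner{x-y}{x^*-y^*}\geq 0$.

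For maximality suppose $(y,y^*)$ is monotonically related to $T$. Replacing $h$ by
$\tilde h(x,x^*)=h(x+y,x^*+y^*)-\inner{x}{y^*}-\inner{y}{x^*}-\inner{y}{y^*}$,
a direct check shows $\tilde h$ still satisfies both hypotheses of the theorem and that the associated operator is $T-(y,y^*)$, so the task reduces to proving $h(0,0)=0$ under the extra assumption $\inner{a}{a^*}\geq 0$ for every $(a,a^*)\in T$.

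The central step is to exhibit a point of $T$ by Fenchel duality. Set $g(x,x^*):=\tfrac12\normq{x}+\tfrac12\normq{x^*}$ and $\psi(x,x^*):=h^*(x^*,x)+g(x,x^*)$. The second hypothesis and the elementary bound $\inner{x}{x^*}+g(x,x^*)\geq\tfrac12(\norm{x}-\norm{x^*})^2\geq 0$ give $\psi\geq 0$. Since $g$ is finite and continuous on $X\times X^*$, the Attouch--Br\'ezis qualification is automatic and Fenchel duality for $\inf\psi$ comes with attained dual:
\[
0\;\leq\;\inf\psi \;=\; \max_{(u,u^*)\in X\times X^*}\bigl[-h(u,u^*)-g(u,u^*)\bigr].
\]
Reflexivity is used here exactly through $X^{**}=X$ and the Fenchel--Moreau identity $h^{**}=h$, to identify the conjugate of $(x,x^*)\mapsto h^*(x^*,x)$ with $h$. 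At a maximiser $(u_0,u_0^*)$ one has $h(u_0,u_0^*)+g(u_0,u_0^*)\leq 0$, while
\[
h(u_0,u_0^*)+g(u_0,u_0^*)\geq \inner{u_0}{u_0^*}+g(u_0,u_0^*)\geq -\norm{u_0}\norm{u_0^*}+g(u_0,u_0^*)\geq 0,
\]
forcing all three inequalities to be equalities. In particular $(u_0,u_0^*)\in T$, $\norm{u_0}=\norm{u_0^*}$, and $\inner{u_0}{u_0^*}=-\norm{u_0}\norm{u_0^*}$. Monotone relatedness then gives $\inner{u_0}{u_0^*}\geq 0$, hence $\norm{u_0}\norm{u_0^*}=0$; combined with the norm equality, $(u_0,u_0^*)=(0,0)$, so $(y,y^*)\in T$.

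For the final identification, put $T':=\{(x,x^*):h^*(x^*,x)=\inner{x}{x^*}\}$. Applying Fenchel--Young at $(x,x^*)\in T$ and $(y,y^*)\in T'$ collapses to $\inner{x-y}{x^*-y^*}\geq 0$, so $T\cup T'$ is monotone; by the already-proven maximality of $T$, $T'\subseteq T$. Running the whole argument with $h$ replaced by the (hypotheses-preserving) function $(x,x^*)\mapsto h^*(x^*,x)$ yields maximality of $T'$, and hence the reverse inclusion. The one genuinely delicate step is the Fenchel-duality one: the natural candidate $h+g$ is bounded below but typically not coercive, so its minimiser cannot be obtained by direct compactness; dualising through $\psi=h^*(\cdot,\cdot)+g$ instead converts the problem into one whose dual is automatically attained, and reflexivity is exactly the ingredient that makes that dual live back on $X\times X^*$.
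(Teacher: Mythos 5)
Your proposal is correct, but note that the paper does \emph{not} prove Theorem~\ref{th:BuSv} at all: it quotes it from Burachik and Svaiter's earlier article as motivation, and then proves a strictly more general non-reflexive statement (Theorem~\ref{th:main}) by a fundamentally different route. Your argument is the classical reflexive one: translate so that the test point is the origin; set $\phi(x,x^*):=h^*(x^*,x)$ and $g(x,x^*):=\tfrac12\normq{x}+\tfrac12\normq{x^*}$; apply Fenchel duality to $\phi+g$ so that the \emph{attained} dual maximizer $(u_0,u_0^*)$ lives back in $X^*\times X^{**}=X^*\times X$, where reflexivity and $h^{**}=h$ identify $\phi^*$ with $h$; and squeeze the resulting chain $0\geq h(u_0,u_0^*)+g(u_0,u_0^*)\geq \inner{u_0}{u_0^*}+g(u_0,u_0^*)\geq 0$ to force $(u_0,u_0^*)\in T$, $\norm{u_0}=\norm{u_0^*}$, and $\inner{u_0}{u_0^*}=-\norm{u_0}\norm{u_0^*}$, after which monotone relatedness to the origin yields $(u_0,u_0^*)=(0,0)$. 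This approach is unavailable in a non-reflexive space, because the attained dual lives in $X^*\times X^{**}$ and cannot be pulled back to $X\times X^*$; the paper therefore works with approximate primal minimizers (Theorem~\ref{th:0}, Corollary~\ref{cr:pre.br}) and iterates a Br{\o}nsted--Rockafellar-type construction (Theorem~\ref{th:br.prel}) to produce a Cauchy sequence in $X\times X^*$ converging to a point of $T$. So the two proofs genuinely diverge at the step where exact dual attainment would be needed, which is precisely the point where reflexivity bites.

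Two minor points of hygiene in your write-up. First, the phrase ``$T\cup T'$ is monotone'' is a slight overclaim, since you never verify monotonicity of pairs \emph{inside} $T'$; but the conclusion $T'\subseteq T$ only requires that each element of $T'$ be monotonically related to all of $T$, which you do establish via Fenchel--Young, so nothing is lost. Second, the reverse inclusion $T\subseteq T'$ relies on $(x,x^*)\mapsto h^*(x^*,x)$ satisfying both hypotheses, which again uses $h^{**}=h$ and is thus another genuine use of reflexivity; it is worth contrasting this with the paper's proof of item~1 of Theorem~\ref{th:main}, which instead differentiates the duality product (Lemma~\ref{lm:aux.dif}) to obtain $(x^*,x)\in\partial h(x,x^*)$ directly, an argument that survives in the non-reflexive setting.
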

Theorem~\ref{th:BuSv} has been used for characterizing maximal
monotonicity~\cite{Za,Borwein} in reflexive Banach spaces.
It is an open question whether~\eqref{eq:hhstar} is also a sufficient
condition for maximal monotonicity in a non-reflexive Banach space. A
natural generalization of~\eqref{eq:hhstar} in a generic Banach space is
\begin{equation} \label{eq:hhstar2}
 \begin{array}{lcl}
     h\,(x,x^*)&\geq& \inner{x}{x^*}, \quad \forall (x,x^*)\in X\times X^*\\
     h^*(x^*,x^{**})&\geq &\inner{x^*}{x^{**}}, \quad \forall (x^*,x^{**})\in X^*\times X^{**}.
   \end{array}
\end{equation}
In this paper, we prove that~\eqref{eq:hhstar2} is a sufficient
condition for a lower semicontinuous convex function $h$ to represent
a maximal monotone operator in a generic Banach space.

\par The theory of convex representations of maximal monotone
operators is closely related to the study of a family of enlargements
of such operators~\cite{BuSvSet02} introduced in~\cite{SvSet00}. In
particular, an important question concerning the study of
$\varepsilon$-enlargements~\cite{BuIuSvSet97, BuSagazSv99, BuSvSet99},
$T^\varepsilon$, of a maximal monotone operator $T$ is whether an
element in the graph of $T^\varepsilon$ can be approximated by an
element in the graph of $T$. This question has been successfully
solved for the extension $\partial_\varepsilon f$, of $\partial f$, by
Br\o nsted and Rockafellar in \cite{Bro-Rock65}: Given $\varepsilon
>0$ and $x^* \in \partial_\varepsilon f(x)$, for all $\lambda >0$
there exists $\bar x^*_\lambda \in \partial f(\bar x_\lambda)$, such
that
\begin{equation} \label{eq:ineq.br.rock}
 \norm{\bar x_\lambda-x} \leq \lambda, \qquad \norm{\bar x^*_\lambda-x^*} \leq \frac{\varepsilon}{\lambda}.
\end{equation}
It does make sense to ask if the same property is valid for maximal monotone operators, that are not subdifferentials, with respect to its $\varepsilon$-enlargements: Let $X$ is a real Banach space, $T:X\tos X^{*}$ a maximal monotone operator and $x^* \in T^\varepsilon(x)$ for some $\varepsilon > 0$. Given $\lambda >0$, does there exists $\bar x_\lambda^* \in T^\varepsilon(\bar x_\lambda)$ such that \eqref{eq:ineq.br.rock} is valid\;?

The answer is affirmative in a reflexive Banach space setting~\cite{BuSvSet99} but is negative in a non-reflexive Banach space~\cite{SimonsSet99}. From now on, we will refer to this fact as {\it Br\o nsted-Rockafellar property}.

The major goal of this paper, is to show that~\eqref{eq:hhstar2} is a sufficient condition for a lower semicontinuous convex function $h$ to represent a maximal monotone operator in a generic Banach space and that such operators satisfy a {\it strict Br\o nsted-Rockafellar property} (see Theorem~\ref{th:main}, item 4).

The manuscript is organized as follows: In Section \ref{sec:basic} we
establish some well known results and the notation to be used in the
article. In Section \ref{sec:BrRock} we are concerned with preliminary
technical results and in Section ~\ref{sec:Main} we prove our main
results.


\section{Basic Results and Notation}   \label{sec:basic}

The norms on  $X$, $X^*$ and $X^{**}$ will be denoted by
$\| \cdot\|$.  We use the
notation $\BR$ for the extended real numbers:
\[
\BR=\{-\infty\}\cup\R\cup\{\infty\}.
\]

A convex function $f:X\to\BR$ is said to be proper if $f>-\infty$ and
there exists a point $\hat x\in X$ for which $f(\hat x)< \infty$. 
The {\it subdifferential} of $f$ is the point to set operator $\partial f: X \tos X^{*} $ defined at $ x \in X $ by
$$ \partial f (x) = \lbrace x^* \in X^{*} \ \ | \ \ f(y) \geq f(x) + \langle y - x, x^* \rangle, \ \ \mbox{for all} \ \ y  \in X \rbrace .$$
For each $ x \in X $, the elements $x^* \in \partial f(x) $ are called {\it subgradients} of $f$.
Rockafellar proved that if $f$ is proper, convex and lower
semicontinuous, then $\partial f$ is maximal monotone on $X$ \cite{Rock70}.

{\it Fenchel-Legendre conjugate}
of $f:X\to\BR $ is
$f^* : X^{*} \rightarrow \BR $ defined by
$$ f^* (x^*) = \mbox{sup} \lbrace \langle x, x^* \rangle - f(x) \ \ | \ \ x \in X \rbrace. $$
Note that $f^*$ is always convex and lower semicontinuous.
If $f$ is
 proper convex and  lower semicontinuous, then $f^*$ is proper and from its definition,
follows directly {\it Fenchel-Young inequality} : for all $x \in X$, $x^* \in X^{*}$,

\begin{equation} \label{eq:F-Y}
  f(x)+f^*(x^*) \geq \langle x, x^* \rangle \quad \mbox{and} \quad
  f(x)+f^*(x^*)=\langle x, x^* \rangle \quad \mbox{iff} \quad x^* \in \partial f(x).
 \end{equation}
Note that $h(x, x^*):=f(x)+f^*(x^*)$ fully characterizes $\partial f$.

The concept of {\it $\varepsilon$-subdifferential} of a convex
function $f$ was introduced by Br\o nsted and Rockafellar
\cite{Bro-Rock65}. It is a point to set operator
$\partial_{\varepsilon}f: X \tos X^{*}$ defined at each $x \in X$ as
$$ \partial_{\varepsilon} f (x) = \lbrace x^* \in X^{*} 
         \ \ | \ \ f(y) \geq f(x) + \langle y - x, x^* \rangle -
         \varepsilon, 
     \ \ \mbox{for all} \ \ y  \in X \rbrace ,$$
where $ \varepsilon \geq 0$. Note that $\partial f = \partial_{0} f$ 
and $\partial f(x) \subset \partial_{\varepsilon} f(x)$, for all $
\varepsilon \geq 0.$ 
Using the conjugate function $f^*$ of $f$ it is easy to see that
\begin{equation}
  \label{eq:eps-sub} 
  x^* \in \partial_{\varepsilon} f(x) \ \ 
  \Leftrightarrow \ \ f(x)+f^*(x^*) \leq \langle x, x^* \rangle +
  \varepsilon.
 \end{equation}

An important tool to be used in the next sections is the classical Fenchel duality formula, which we present now.

\begin{theorem}[\mbox{\cite{Brezis}[pp 11]}]
\label{th:F-R}
Let us consider two proper and convex functions $f$ and $g$ such that
$f$ (or $g$) is continuous at a point $\hat x \in X$ for which $f(\hat x) <
\infty$ and $g(\hat x) < \infty$. Then,
\begin{equation}
  \label{eq:F-Dual}
  \inf_{x \in X} \lbrace f(x) + g(x) \rbrace 
  = \max_{x^{*} \in X^{*}} \lbrace - f^*(-x^*) - g^{*}(x^*) \rbrace.
\end{equation}
\end{theorem}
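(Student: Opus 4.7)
The plan is to establish both inequalities of \eqref{eq:F-Dual} and the attainment of the supremum on the right. Let $\alpha := \inf_{x \in X} \{ f(x) + g(x) \}$ and $\beta := \sup_{x^* \in X^*} \{ -f^*(-x^*) - g^*(x^*) \}$. I would first dispatch the trivial cases: if $\alpha = -\infty$, the weak duality below forces $\beta = -\infty$ and the ``max'' is vacuously attained at every $x^*$; if $\alpha = +\infty$ the identity still holds (for instance with $x^* = 0$ when $f$ or $g$ is $+\infty$ everywhere, which contradicts properness so this case does not arise under the hypotheses). So I assume $\alpha \in \R$.

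\textbf{Weak duality.} From Fenchel--Young \eqref{eq:F-Y} applied to $f$ at the pair $(x,-x^*)$ and to $g$ at $(x,x^*)$ we get $f(x) \geq -\inner{x}{x^*} - f^*(-x^*)$ and $g(x) \geq \inner{x}{x^*} - g^*(x^*)$. Adding and then taking $\inf_x$ and $\sup_{x^*}$ gives $\alpha \geq \beta$. This is the routine half and requires no qualification on $f$ or $g$.

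\textbf{Strong duality and attainment.} Here I would appeal to geometric Hahn--Banach in $X \times \R$ applied to the two convex sets
\[
A = \{(x,t) \in X\times\R : f(x) < t\}, \qquad B = \{(x,t) \in X\times\R : t \leq \alpha - g(x)\}.
\]
Both are convex and non-empty, and they are disjoint by the very definition of $\alpha$. Crucially, $A$ has non-empty interior: because $f$ is continuous at $\hat x$ with $f(\hat x)<\infty$, there is a neighbourhood of $(\hat x, f(\hat x)+1)$ contained in $A$. Geometric Hahn--Banach then produces a nonzero continuous linear functional, i.e.\ a pair $(x^*,\lambda) \in X^*\times \R$ and a constant $c \in \R$ such that
\[
\inner{x}{x^*} + \lambda t \geq c \ \ \forall (x,t)\in A, \qquad \inner{x}{x^*} + \lambda t \leq c \ \ \forall (x,t)\in B.
\]
Pushing $t \to +\infty$ in $A$ forces $\lambda \geq 0$; pushing $t \to -\infty$ in $B$ then forces $\lambda > 0$, so after rescaling $\lambda = 1$. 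Evaluating on $A$ with $t = f(x)+\varepsilon$ and letting $\varepsilon\downarrow 0$ yields $f(x) + \inner{x}{x^*} \geq c$ for all $x$, i.e.\ $f^*(-x^*) \leq -c$; evaluating on $B$ with $t = \alpha - g(x)$ yields $g^*(x^*) \leq c - \alpha$. Adding gives $-f^*(-x^*) - g^*(x^*) \geq \alpha$, which combined with weak duality shows $\beta = \alpha$ and that the supremum is attained at this $x^*$.

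\textbf{Main obstacle.} The decisive step is guaranteeing $\lambda \neq 0$ in the separating functional; this is exactly what the continuity hypothesis on $f$ at $\hat x$ buys us, by ensuring that $A$ has non-empty interior so the separation in $X\times \R$ is not a ``vertical'' separation in the $x$-direction alone (which would only separate the effective domains and give no information about the Fenchel conjugates). Without this Slater-type qualification one could at best separate closed convex sets and end up with $\lambda = 0$, and both strong duality and attainment could fail.
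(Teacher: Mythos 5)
The paper states this theorem only as a citation to Brezis and supplies no proof of its own, so there is no in-paper argument to compare against; I will assess your argument on its merits. The overall skeleton is the standard Hahn--Banach proof of Fenchel--Rockafellar duality, and most of it is sound: the reduction to $\alpha\in\R$ (note that $\alpha\le f(\hat x)+g(\hat x)<\infty$ automatically), weak duality via Fenchel--Young, the choice of the sets $A$ and $B$, the observation that continuity of $f$ at $\hat x$ makes $A$ have non-empty interior, and the final bookkeeping producing $f^*(-x^*)\le -c$ and $g^*(x^*)\le c-\alpha$ are all correct.

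The one genuinely broken step is the sentence ``pushing $t\to-\infty$ in $B$ then forces $\lambda>0$.'' If $\lambda=0$, the constraint on $B$ reads $\inner{x}{x^*}\le c$, which is entirely unaffected by sending $t\to-\infty$; that maneuver only re-derives $\lambda\ge 0$ and cannot exclude $\lambda=0$. The correct exclusion is exactly the one you allude to in your ``Main obstacle'' paragraph, and it must replace that sentence: if $\lambda=0$, the two separating inequalities collapse to $\inner{x}{x^*}\ge c$ for every $x$ with $f(x)<\infty$ and $\inner{x}{x^*}\le c$ for every $x$ with $g(x)<\infty$. Since $\hat x$ lies in both effective domains, $\inner{\hat x}{x^*}=c$; and since $f$ is continuous at $\hat x$, there is $\delta>0$ with $f$ finite on the ball of radius $\delta$ about $\hat x$, so $\inner{\hat x+y}{x^*}\ge c=\inner{\hat x}{x^*}$ whenever $\norm{y}<\delta$, i.e.\ $\inner{y}{x^*}\ge 0$ for all $\norm{y}<\delta$, which forces $x^*=0$. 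Then $(x^*,\lambda)=(0,0)$ contradicts that the separating functional is nonzero, so in fact $\lambda>0$. With this repair the argument is complete.
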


\section{Preliminary Results}   \label{sec:BrRock}

In this section we present some preliminary technical results which will be used in the next sections.
\begin{theorem}
  \label{th:0}
  Suppose that $h:X\times X^*\to\BR $ is proper, convex, lower
  semicontinuous and
  \[
  \begin{array}{lcl}
     h\,(x,x^*)&\geq& \inner{x}{x^*} ,\quad \forall (x,x^*)\in X\times X^*\\
     h^*(x^*,x^{**})&\geq &\inner{x^*}{x^{**}} , \quad \forall (x^*,x^{**})\in
     X^*\times X^{**}.
   \end{array}
   \]
   Then,
   for any $\varepsilon>0$ there exists $({\tilde x},{\tilde x}^*)\in
   X\times X^*$ such that
   \[
   h({\tilde x},{\tilde x}^*)+\frac{1}{2}
   \normq{ {\tilde x} }+\frac{1}{2}\normq{{\tilde x}^* }< \varepsilon \qquad
   \normq{ {\tilde x} }\leq h(0,0), \quad \normq{ {\tilde x}^*}\leq h(0,0),
   \]
   where the two last inequalities are strict in the case $h(0,0)>0$.
\end{theorem}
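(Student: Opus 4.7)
The main tool will be the Fenchel duality formula (Theorem~\ref{th:F-R}), applied to the pair $h$ and $g(x,x^*) := \tfrac{1}{2}\|x\|^2 + \tfrac{1}{2}\|x^*\|^2$. The plan is to first show that $\inf_{(x,x^*)}(h+g) = 0$ using duality, and then to scale an approximate minimizer so as to meet the norm constraints.

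For the first step, the conjugate of $g$ on $X \times X^*$ is $g^*(y^*,y^{**}) = \tfrac{1}{2}\|y^*\|^2 + \tfrac{1}{2}\|y^{**}\|^2$, computed separately on $X^*$ and $X^{**}$ using $\sup_{\|u\|\le t}\inner{u}{v} = t\|v\|$. Since $g$ is finite and continuous everywhere, Theorem~\ref{th:F-R} gives
\[
\inf_{(x,x^*)}\bigl[h(x,x^*) + g(x,x^*)\bigr] \;=\; \max_{(y^*,y^{**})}\bigl[-h^*(-y^*,-y^{**}) - g^*(y^*,y^{**})\bigr].
\]
On one hand, $h(x,x^*) \ge \inner{x}{x^*}$ together with Young's inequality $g(x,x^*) \ge \|x\|\|x^*\| \ge -\inner{x}{x^*}$ forces $h + g \ge 0$ pointwise. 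On the other, $h^*(-y^*,-y^{**}) \ge \inner{-y^*}{-y^{**}} = \inner{y^*}{y^{**}}$ and Young on $X^*\times X^{**}$ gives $g^*(y^*,y^{**}) \ge -\inner{y^*}{y^{**}}$, so the dual objective is $\le 0$ at every $(y^*,y^{**})$. Combining, $\inf(h+g) = 0$, and for any $\eta > 0$ one can pick $(\tilde x_0,\tilde x_0^*)$ with $h(\tilde x_0,\tilde x_0^*) + g(\tilde x_0,\tilde x_0^*) < \eta$.

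To enforce the norm bounds I would then scale: set $(\tilde x,\tilde x^*) := (\lambda\tilde x_0, \lambda\tilde x_0^*)$ with $\lambda \in (0,1]$ to be chosen. Convexity of $h$ along the segment from $(0,0)$ to $(\tilde x_0,\tilde x_0^*)$, combined with the quadratic homogeneity of $g$, produces after rearrangement
\[
h(\lambda\tilde x_0,\lambda\tilde x_0^*) + g(\lambda\tilde x_0,\lambda\tilde x_0^*) \;\le\; (1-\lambda)\,h(0,0) + \lambda\,\eta - \lambda(1-\lambda)\,g(\tilde x_0,\tilde x_0^*).
\]
When $h(0,0) = 0$, take $(\tilde x,\tilde x^*) = (0,0)$ and conclude immediately. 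When $h(0,0) > 0$, the strict norm bounds reduce to $\lambda^2\max(\|\tilde x_0\|^2,\|\tilde x_0^*\|^2) < h(0,0)$, while the value bound amounts to making the right-hand side above less than $\varepsilon$.

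The main obstacle will be arranging that these two requirements on $\lambda$ are simultaneously realisable: compatibility fails if $\|\tilde x_0\|$ or $\|\tilde x_0^*\|$ is too much larger than $\sqrt{h(0,0)}$. To control these norms I would exploit the dual side: at any Fenchel--Rockafellar maximizer $(\bar y^*,\bar y^{**})$ of value $0$, the hypothesis combined with Young forces $\|\bar y^*\| = \|\bar y^{**}\| =: r$, and Fenchel--Young applied at $(0,0)$ gives $r^2 \le h(0,0)$; using this dual pair as a test functional then produces the bound $\|\tilde x_0\|, \|\tilde x_0^*\| \le r + \sqrt{2\eta}$ at any primal $\eta$-minimizer. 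Picking $\eta$ sufficiently small in terms of $\varepsilon$ and $h(0,0)$ will then close the scaling argument and yield the strict norm bounds.
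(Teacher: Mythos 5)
Your proposal is correct and follows essentially the same route as the paper: Fenchel--Rockafellar duality with the quadratic perturbation $g$ to establish $\inf(h+g)=0$, norm control on an $\eta$-minimizer via the dual maximizer together with Fenchel--Young, and a radial rescaling toward the origin to enforce the strict norm bounds. The only cosmetic differences are that you exploit the $2$-homogeneity of $g$ where the paper merely uses its convexity, and you leave the final choice of $\eta$ (the paper takes $\eta<\varepsilon^2/(2h(0,0))$) and the $h(0,0)=\infty$ case implicit, but both are routine to fill in.
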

\begin{proof}
  If $h(0,0)<\varepsilon$ then $({\tilde x},{\tilde x}^*)=(0,0)$ has
  the desired properties.
  The non-trivial case is 
  \begin{equation}
  \varepsilon\leq h(0,0),
          \label{eq:hepsilon}
  \end{equation}
  which we consider now.
  Using the first assumption on $h$, we conclude that for any $(x,\dl x) \in X\times X^*$,
  \begin{equation}
    \label{eq:a1}
      \begin{array}{rcl}
    h( x,\dl x)+\frac{1}{2} \normq{x}+\frac{1}{2}\normq{\dl x}& \geq& 
    \inner{x}{\dl x}+\frac{1}{2}\normq{x}+\frac{1}{2}\normq{\dl x}
    \\[.5em]
    & \geq& -\norm{x}\,\norm{\dl x} +\frac{1}{2}\normq{x}
    +\frac{1}{2}\normq{\dl x}
    \\[.5em]
    & =& \frac{1}{2}\left( \norm{x}-\norm{\dl x}\right)^2\geq 0.
    \end{array}
  \end{equation}
   The second assumption on $h$ also gives, for all $(\dl z,\ddl z) \in X^*\times X^{**}$,
   \begin{equation}
     \label{eq:a2}
       \begin{array}{rcl}
    \dl h(\dl z,\ddl z)+\frac{1}{2} \normq{\dl z}+\frac{1}{2}\normq{\ddl z}& \geq& 
    \inner{\dl z}{\ddl z}+\frac{1}{2}\normq{\dl z}+\frac{1}{2}\normq{\ddl z}
    \\[.5em]
    & \geq& -\norm{\dl z}\,\norm{\ddl z} +\frac{1}{2}\normq{\dl z}
    +\frac{1}{2}\normq{\ddl z}
    \\[.5em]
    & =& \frac{1}{2}\left( \norm{\dl z}-\norm{\ddl z}\right)^2\geq 0.
    \end{array}
   \end{equation}

   Now using Theorem~\ref{th:F-R} with 
   $f,g:X\times X^*\to \BR$,
   \[
   f(x,x^*)=h(x,x^*), \qquad
   g(x,x^*)=\frac{1}{2}\normq{x}+\frac{1}{2}\normq{x^*}
   \]
   we conclude that there exists $(\hat z^*,\hat z^{**}) \in X^*\times X^{**}$ such that
   \[
   \inf\;
   h(x,x^*)+\frac{1}{2}\normq{x}+\frac{1}{2}\normq{x^*}=
   - h^*({\hat z}^*,{\hat z}^{**})-\frac{1}{2}\normq{{\hat z}^*}
  -\frac{1}{2}\normq{{\hat z}^{**}}.
   \]
   As the right hand
   side of the above equation is non positive and the left hand side is
   non negative, these two terms are zero. Therefore,
   \begin{equation}
     \label{eq:a3}
     \inf\;\; h(x,x^*)+\frac{1}{2}\normq{x}+\frac{1}{2}\normq{x^*}=0,
   \end{equation}
   and 
   \begin{equation}
     \label{eq:a3b}
      h^*({\hat z}^*,{\hat z}^{**})+\frac{1}{2}\normq{\dl{\hat z}}
   +\frac{1}{2}\normq{\ddl {\hat z}}=0.
   \end{equation}
   For $(\dl z,\ddl z)=(\dl {\hat z},\ddl {\hat z})$, all inequalities
   on \eqref{eq:a2} must hold as equalities. Therefore,
   \begin{equation}
     \label{eq:a4}
     \normq{\dl {\hat z}} =\normq{  \ddl {\hat z}}=-\dl h(\dl {\hat
       z},\ddl{\hat z})\leq h(0,0),
   \end{equation}
   where the last inequality follows from the definition of conjugate.

   Using \eqref{eq:a3} we conclude that for any $\eta>0$, there exists
   $({x_\eta},{x_\eta}^*) \in X\times X^*$ such that
   \begin{equation}
     \label{eq:ineq}
     h({x_\eta},{x_\eta}^*)
     +\frac{1}{2}\normq{{x_\eta} }+\frac{1}{2}\normq{{x_\eta}^*}<\eta.
   \end{equation}
   If $h(0,0)=\infty$, then, taking $\eta=\varepsilon$ and
   $({\tilde x},{\tilde x}^*)=({x_\eta},{x_\eta}^*)$  we conclude
   that the theorem holds.
   Now, we discuss the case $h(0,0)<\infty$.
  In this case, using \eqref{eq:a4} we have
  \begin{equation}
          \norm{\dl{\hat z}}=\norm{\ddl{\hat z}}\leq \sqrt{ h(0,0)}.
          \label{eq:est.dual}
  \end{equation}
  Note that from  \eqref{eq:hepsilon} we are considering
  \begin{equation}
    \varepsilon \leq  h(0,0)<\infty.
          \label{eq:range}
  \end{equation}
  Combining \eqref{eq:ineq} with  \eqref{eq:a3b} and
  using  Fenchel-Young inequality \eqref{eq:F-Y} we
   obtain
   \[
   \begin{array}{rcl}
     \eta&>&
     h({x_\eta},{x_\eta}^*)+\frac{1}{2}\normq{{x_\eta}}+\frac{1}{2}\normq{{x_\eta}^*}
     +  h^*({\hat z}^*,{\hat z}^{**})+\frac{1}{2}\normq{{\hat z}^*}
     +\frac{1}{2}\normq{ {\hat z}^{**}}\\[.5em]
     &\geq& \inner{{x_\eta}}{\hat z^*}+\inner{{x_\eta}^*}{\hat z^{**}}
     +\frac{1}{2}\normq{{x_\eta}}+\frac{1}{2}\normq{{x_\eta}^*}
     +\frac{1}{2}\normq{{\hat z}^*} +\frac{1}{2}\normq{ {\hat
     z}^{**}}\\[.5em]
     &\geq&\frac{1}{2}\normq{{x_\eta}}-\norm{{x_\eta}}\norm{\hat z^*}+
     \frac{1}{2}\normq{{\hat z}^{**}} 
   +\frac{1}{2}\normq{{x_\eta}^*}
   - \norm{{x_\eta}^*}\norm{{\hat z}^{**}} +\frac{1}{2}\normq{ {\hat
   z}^{**}}     \\[.5em]
   &=&\frac{1}{2}\left(\norm{{x_\eta}}-\norm{\hat z^*}
   \right)^2+
   \frac{1}{2}\left(\norm{{x_\eta}^*}-\norm{{\hat z}^{**}}
   \right)^2.
 \end{array}
 \]
  As the two terms in the last inequality are non negative,
  \[\norm{{x_\eta}}< \norm{ {\hat z}^*}+\sqrt{2\eta}
 ,\qquad 
 \norm{{x_\eta}^*}< \norm{ {\hat z}^{**}}+\sqrt{2\eta}.\]
 Therefore, using \eqref{eq:est.dual} we obtain
 \[\norm{{x_\eta}}< \sqrt{h(0,0)}+\sqrt{2 \eta}
 ,\qquad 
 \norm{{x_\eta}^*}<  \sqrt{h(0,0)}+\sqrt{2\eta}.\]
 To end the proof,  take in \eqref{eq:ineq}
  \begin{equation}
    0<       \eta<\frac{\varepsilon^2}{2h(0,0)}
          \label{eq:eta}
  \end{equation}
and let
\begin{equation}
        \label{eq:df.aux}
\tau=\frac{\sqrt{h(0,0)}}{\sqrt{h(0,0)}+\sqrt{2\eta}} \qquad
{\tilde x}=\tau\; x_\eta,
\qquad {\tilde x}^*= \tau \; { {x_\eta}^*}.
\end{equation}
Then,
\[ \norm{{\tilde x}}<\sqrt{h(0,0)},
\qquad \norm{{\tilde x}^*}<\sqrt{h(0,0)}.
\]
Now, using the convexity of  $h$ and of the square of the norms and \eqref{eq:ineq}, we have
\begin{eqnarray*}
        h({\tilde x},{\tilde x}^*)
     +\frac{1}{2}\normq{{\tilde x}}+\frac{1}{2}\normq{{\tilde x}^*}
     &\leq &(1-\tau)\;h(0,0)\\
     &&+\tau\left( \; h(x_\eta, {x_\eta}^*)
     +\frac{1}{2}\normq{{x_\eta}}+\frac{1}{2}\normq{{x_\eta}^*}\right)\\
     &<&
     (1-\tau)\;h(0,0)+\tau\;\eta\\
     &=&h(0,0)-\tau(h(0,0)-\eta).
\end{eqnarray*}
Therefore, using also \eqref{eq:eta}
\begin{eqnarray*}
        \varepsilon-\left(
        h({\tilde x},{\tilde x}^*)
     +\frac{1}{2}\normq{{\tilde x}}+\frac{1}{2}\normq{{\tilde x}^*}
        \right)&\geq&\varepsilon-h(0,0)+\tau(h(0,0)-\eta)\\
        &>&
        \varepsilon-h(0,0)+\tau(h(0,0)-2\eta)\\
        &=&
        \varepsilon-h(0,0)+\sqrt{h(0,0)}\left(
        \sqrt{h(0,0)}-\sqrt{2\eta}
          \right)\\
          &=&\varepsilon-\sqrt{2h(0,0)\eta}>0.
\end{eqnarray*}
which completes the proof.
\end{proof}

\noindent
In Theorem~\ref{th:0} the origin has a special role. In order to use this
theorem with an arbitrary point, 
define, for $h:X\times X^*\to\BR$ and $(z,z^*)\in X\times X^*$,
\begin{equation} \label{eq:def.tr}
        \begin{array}{l}
        h_{(z,z^*)}:X\times X^*\to\BR,\\
        h_{(z,z^*)}(x,x^*)=h(x+z,x^*+z^*)-\big[
        \inner{x}{z^*}+\inner{z}{x^*}+\inner{z}{z^*}\big].
\end{array}
\end{equation}
The next proposition follows directly from algebraic manipulations and from~\eqref{eq:def.tr}.
\begin{proposition} \label{pr:prel}
   Take $h:X\times X^*\to\BR$ and $(z,z^*)\in X\times X^*$.
   \begin{enumerate}
   \item If $h$ is proper, convex and lower semicontinuous, then $h_{(z,z^*)}$ is also
           proper, convex and lower semicontinuous.
   \item $(h_{(z,z^*)})^*=(h^*)_{(z^*,z)}$, where in the right hand side $z$
           is identified with its image by the canonical injection of $X$ into $X^{**}$:
           \[ (h^*)_{(z^*,z)}(x^*,x^{**})
  =h^*(x^*+z^*,x^{**}+z)-\big[ \inner{x^*}{z}+\inner{z^*}{x^{**}}+\inner{z^*}{z}
  \big].
  \]

    \item For any $(x,x^*)\in X\times X^*$,
    \[ h_{(z,z^*)}(x,x^*)-\inner{x}{x^*}=h(x+z,x^*+z^*)-\inner{x+z}{x^*+z^*}.
    \]
    \item If $h$ majorizes the duality product in $X\times X^*$
            then $h_{(z,z^*)}$ also majorizes the duality product in $X\times X^*$.
    \item If $h^*$ majorizes the duality product in $X^*\times X^{**}$
            then $(h_{(z,z^*)})^*$ also majorizes the duality product in
            $X^*\times X^{**}$.
    \end{enumerate}
    \end{proposition}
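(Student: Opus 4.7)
The plan is to handle the five items essentially as routine consequences of the defining formula~\eqref{eq:def.tr}, in a slightly rearranged order so that later items reuse earlier ones. I would start with item~3, which contains the algebraic heart of the statement: expanding $\inner{x+z}{x^*+z^*}$ by bilinearity of the duality pairing gives $\inner{x}{x^*}+\inner{x}{z^*}+\inner{z}{x^*}+\inner{z}{z^*}$, and comparing with the bracket appearing in~\eqref{eq:def.tr} yields the identity on the nose. Given item~3, item~4 is one line: if $h(x,x^*)\ge \inner{x}{x^*}$ everywhere, then the right-hand side in item~3 is non-negative for every $(x,x^*)$, so $h_{(z,z^*)}$ also majorizes the duality product on $X\times X^*$.

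For item~1, I would simply note that $h_{(z,z^*)}$ is obtained from $h$ by composing with the continuous affine bijection $(x,x^*)\mapsto (x+z,x^*+z^*)$ of $X\times X^*$ and subtracting the continuous affine functional $(x,x^*)\mapsto \inner{x}{z^*}+\inner{z}{x^*}+\inner{z}{z^*}$. Each of these operations preserves propriety, convexity, and lower semicontinuity, so $h_{(z,z^*)}$ inherits these properties from $h$.

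For item~2, I would compute $(h_{(z,z^*)})^*(x^*,x^{**})$ directly from the definition of Fenchel conjugate, plug in~\eqref{eq:def.tr}, and perform the change of variables $u=x+z$, $u^*=y^*+z^*$ inside the supremum. Collecting the cross terms via bilinearity turns the expression into $h^*(x^*+z^*,x^{**}+z)$ shifted by exactly the affine correction $\inner{x^*}{z}+\inner{z^*}{x^{**}}+\inner{z^*}{z}$ prescribed in the stated formula for $(h^*)_{(z^*,z)}$. Item~5 then follows by applying the obvious $X^*\times X^{**}$-analog of item~3 to the function $h^*$ with the shift $(z^*,z)$, together with the identification $(h_{(z,z^*)})^*=(h^*)_{(z^*,z)}$ from item~2.

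The only step requiring any care is the bookkeeping in item~2: tracking the several cross terms that appear under the substitution and being consistent about where $z$ is to be viewed as an element of $X$ versus as an element of $X^{**}$ through the canonical injection. No additional tool is needed; the whole proposition is a rigid consequence of bilinearity of the two duality pairings on $X\times X^*$ and $X^*\times X^{**}$.
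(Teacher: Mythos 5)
Your proof is correct and supplies exactly the routine algebraic details that the paper itself omits; the authors merely remark that the proposition ``follows directly from algebraic manipulations'' from the defining formula for $h_{(z,z^*)}$. Your ordering of the items and the substitution $u=y+z$, $u^*=y^*+z^*$ in the conjugate computation are the natural way to carry this out, and your handling of the canonical injection of $z$ into $X^{**}$ in item 2 is the one care point, done correctly.
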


\begin{corollary}
  \label{cr:pre.br}
  Suppose that $h:X\times X^*\to\BR $ is proper, convex, lower
  semicontinuous and
  \[
  \begin{array}{lcl}
     h\,(x,x^*)&\geq& \inner{x}{x^*} ,\quad \forall (x,x^*)\in X\times X^*\\
     h^*(x^*,x^{**})&\geq &\inner{x^*}{x^{**}} , \quad \forall (x^*,x^{**})\in
     X^*\times X^{**}.
   \end{array}
   \]
   Then, for any    $(z,z ^*)\in X\times X^*$ and $\varepsilon>0$ 
 there exist $(\tilde x,{\tilde x}^*) \in X\times X^*$ such that
 \begin{eqnarray*}
   h(\tilde x,{\tilde x}^*)
   &<&\inner{\tilde x}{\tilde x^*}+\varepsilon, \\
    \normq{\tilde x-z}&\leq &  h(z,z^*)-\inner{z}{z^*},\\
    \normq{{\tilde x}^*-z^*} & \le & h(z,z^*)-\inner{z}{z^*}.
 \end{eqnarray*}
   where the two last inequalities are strict in the case
   $\inner{z}{z^*}<h(z,z^*)$.
\end{corollary}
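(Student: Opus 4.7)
The plan is to reduce the corollary to Theorem~\ref{th:0} by exploiting the translation device introduced in \eqref{eq:def.tr} and catalogued in Proposition~\ref{pr:prel}. Set $g := h_{(z,z^*)}$. By items 1, 4, 5 of Proposition~\ref{pr:prel}, $g$ is proper, convex and lower semicontinuous, and both $g$ majorizes the duality product on $X\times X^*$ and $g^*$ majorizes the duality product on $X^*\times X^{**}$. So $g$ satisfies exactly the hypotheses of Theorem~\ref{th:0}.

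Next I would compute $g(0,0)$ via item 3 of Proposition~\ref{pr:prel}: setting $(x,x^*)=(0,0)$ in the identity $g(x,x^*)-\inner{x}{x^*}=h(x+z,x^*+z^*)-\inner{x+z}{x^*+z^*}$, one obtains $g(0,0)=h(z,z^*)-\inner{z}{z^*}$. Applying Theorem~\ref{th:0} to $g$, for the given $\varepsilon>0$ there exist $(\hat x,\hat x^*)\in X\times X^*$ with
\[
g(\hat x,\hat x^*)+\tfrac12\normq{\hat x}+\tfrac12\normq{\hat x^*}<\varepsilon,
\qquad \normq{\hat x}\le g(0,0),\qquad \normq{\hat x^*}\le g(0,0),
\]
the last two inequalities being strict whenever $g(0,0)>0$, i.e.\ whenever $\inner{z}{z^*}<h(z,z^*)$.

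I then translate back by defining $\tilde x:=\hat x+z$ and $\tilde x^*:=\hat x^*+z^*$. The norm bounds on $\tilde x-z=\hat x$ and $\tilde x^*-z^*=\hat x^*$ are then immediate from the displayed estimates, with the stated strictness. For the remaining inequality, apply item 3 of Proposition~\ref{pr:prel} once more at $(\hat x,\hat x^*)$ to get
\[
h(\tilde x,\tilde x^*)-\inner{\tilde x}{\tilde x^*}
  = g(\hat x,\hat x^*)-\inner{\hat x}{\hat x^*}
  \le g(\hat x,\hat x^*)+\tfrac12\normq{\hat x}+\tfrac12\normq{\hat x^*}<\varepsilon,
\]
where the middle inequality uses $-\inner{\hat x}{\hat x^*}\le\norm{\hat x}\norm{\hat x^*}\le\tfrac12(\normq{\hat x}+\normq{\hat x^*})$. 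Rearranging yields $h(\tilde x,\tilde x^*)<\inner{\tilde x}{\tilde x^*}+\varepsilon$, which finishes the proof.

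There is no real obstacle here: all the analytic work has already been done in Theorem~\ref{th:0}, and the only ingredient needed beyond it is the fact that the translation $h\mapsto h_{(z,z^*)}$ preserves the two majorization conditions and carries the quantity $h(z,z^*)-\inner{z}{z^*}$ to the value $g(0,0)$ at which Theorem~\ref{th:0} is sensitive. The only mild care required is bookkeeping around the strict inequality case $\inner{z}{z^*}<h(z,z^*)$, which is exactly the case $g(0,0)>0$ covered by Theorem~\ref{th:0}.
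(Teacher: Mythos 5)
Your proof is correct and follows essentially the same route as the paper: translate to $h_{(z,z^*)}$, apply Theorem~\ref{th:0}, identify $h_{(z,z^*)}(0,0)=h(z,z^*)-\inner{z}{z^*}$, and translate back, using $-\inner{\hat x}{\hat x^*}\le\tfrac12\normq{\hat x}+\tfrac12\normq{\hat x^*}$ to pass from the quadratic bound to the gap $h-\pi$. The only cosmetic difference is that the paper splits off the case $h(z,z^*)=\inner{z}{z^*}$ explicitly, while you handle it uniformly via the non-strict clause of Theorem~\ref{th:0}.
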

\begin{proof}
  If $h(z,z^*)=\inner{z}{z^*}$ then $(\tilde x,\tilde x^*)=(z,z^*)$ satisfy
  the desired conditions.
  Assume that
  \begin{equation}
   0<h(z,z^*)-\inner{z}{z^*}.
          \label{eq:alpha}
  \end{equation}
  Using Proposition~\ref{pr:prel} and applying Theorem~\ref{th:0} for the function $h_{(z,z^*)}$
  we conclude that 
   there exists $(\tilde z,\tilde z^*) \in X\times X^*$ such that
  \begin{equation}
   h_{(z,z^*)}(\tilde z,\tilde z^*)+\frac{1}{2}\normq{\tilde z}+
  \frac{1}{2}\normq{\tilde z^*}<\varepsilon,\quad
  \normq{\tilde z}<h_{(z,z^*)}(0,0),\quad \normq{\tilde z^*}<h_{(z,z^*)}
 (0,0).
          \label{eq:g.aux}
  \end{equation}
  By \eqref{eq:def.tr}, note that $h_{(z,z^*)}(0,0)=h(z,z^*)-\inner{z}{z^*}$.  Let
  \[ \tilde x=\tilde z+z,\qquad \tilde x^*=\tilde z^*+z^* .\]
  Therefore, using \eqref{eq:g.aux} and \eqref{eq:alpha}, we have
  \[ \normq{ {\tilde x}-z} < h(z,z^*)-\inner{z}{z^*} ,
  \qquad
  \normq{ {\tilde x^*}-z^*} <h(z,z^*)-\inner{z}{z^*}.
  \]
  To end the proof of the first part of the corollary, use Proposition~\ref{pr:prel}
  and \eqref{eq:g.aux} to obtain
  \[
  \begin{array}{rcl}
          h(\tilde x,\tilde x^*)-\inner{\tilde x}{\tilde x^*}&=&h_{(z,z^*)}(\tilde z,\tilde z^*)-\inner{\tilde z}{\tilde z^*}\\
       &\leq&
       h_{(z,z^*)}(\tilde z,\tilde z^*)+\frac{1}{2}\normq{\tilde z}+
    \frac{1}{2}\normq{\tilde z^*} < \varepsilon.
    \end{array}
    \]
\end{proof}

\begin{theorem} \label{th:br.prel}
  Suppose that $h:X\times X^*\to\BR $ is proper, convex, lower
  semicontinuous and
  \[
  \begin{array}{lcl}
     h\,(x,x^*)&\geq& \inner{x}{x^*} , \quad \forall (x,x^*)\in X\times X^*\\
     h^*(x^*,x^{**})&\geq &\inner{x^*}{x^{**}} , \quad \forall (x^*,x^{**})\in
     X^*\times X^{**}.
   \end{array}
   \]
  If $(x,x^*)\in X\times X^*$, $\varepsilon>0$ and 
  \[ h(x,x^*)< \inner{x}{x^*}+\varepsilon,
  \]
  then, 
  there exists $(\px,\pv)\in
  X\times X^*$ such that
  \[
  h(\px,\pv)=\inner{\px}{\pv},\qquad
  \|x-\px\|< \sqrt\varepsilon ,\quad
  \|x^*-\pv\|<\sqrt\varepsilon.
  \]
  Moreover, for any $\lambda>0$ there exists $(\bar x_\lambda,\bar x^*
  _\lambda) \in X\times X^*$ such that
  \[
  h(\bar x_\lambda,\bar x^*_\lambda)=\inner{\bar x}{\bar x^*_\lambda},
  \qquad
  \|\bar x_\lambda- x\|< \lambda ,\qquad
  \|\bar x^*_\lambda - x^*\|<\frac{\varepsilon}{\lambda}.
  \]
\end{theorem}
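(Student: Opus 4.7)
The plan is to prove the symmetric (with $\sqrt{\varepsilon}$ in both bounds) statement first by iterating Corollary~\ref{cr:pre.br}, and then reduce the asymmetric ``moreover'' statement to it by a simple rescaling of variables.

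For the first part, set $(x_0,x_0^*)=(x,x^*)$ and $\alpha_0=h(x_0,x_0^*)-\inner{x_0}{x_0^*}$, so that $0\le\alpha_0<\varepsilon$ by hypothesis. If $\alpha_0=0$ take $(\px,\pv)=(x,x^*)$. Otherwise, fix a positive null sequence $(\varepsilon_n)_{n\ge 1}$ with $\sum_{n\ge 1}\sqrt{\varepsilon_n}<\sqrt{\varepsilon}-\sqrt{\alpha_0}$, which exists because $\sqrt{\alpha_0}<\sqrt{\varepsilon}$. Given $(x_n,x_n^*)$ with $\alpha_n:=h(x_n,x_n^*)-\inner{x_n}{x_n^*}>0$, apply Corollary~\ref{cr:pre.br} at $(z,z^*)=(x_n,x_n^*)$ with tolerance $\varepsilon_{n+1}$ to obtain $(x_{n+1},x_{n+1}^*)$ with $\alpha_{n+1}<\varepsilon_{n+1}$ and the \emph{strict} bounds $\|x_{n+1}-x_n\|^2<\alpha_n$ and $\|x_{n+1}^*-x_n^*\|^2<\alpha_n$ (stopping if some $\alpha_n=0$). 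Telescoping yields $\|x_m-x_0\|\le\sqrt{\alpha_0}+\sum_{k=1}^{m-1}\sqrt{\alpha_k}<\sqrt{\alpha_0}+\sum_{k\ge 1}\sqrt{\varepsilon_k}<\sqrt{\varepsilon}$ uniformly in $m$, and likewise in $X^*$, so both sequences are Cauchy in their respective Banach spaces. Passing to limits $\px$, $\pv$ preserves the strict bounds $\|\px-x\|<\sqrt{\varepsilon}$ and $\|\pv-x^*\|<\sqrt{\varepsilon}$. Lower semicontinuity of $h$ together with continuity of the duality product yields $h(\px,\pv)\le\liminf_n\bigl[\inner{x_n}{x_n^*}+\alpha_n\bigr]=\inner{\px}{\pv}$, while the reverse inequality is one of the standing assumptions on $h$; hence $h(\px,\pv)=\inner{\px}{\pv}$.

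For the moreover part, given $\lambda>0$ set $\mu=\lambda/\sqrt{\varepsilon}$ and $\tilde h(u,u^*)=h(\mu u,u^*/\mu)$. A direct change of variables shows $\tilde h$ is proper, convex and lower semicontinuous, $\tilde h\ge\inner{\cdot}{\cdot}$, and $\tilde h^*(v^*,v^{**})=h^*(v^*/\mu,\mu v^{**})\ge\inner{v^*}{v^{**}}$, so $\tilde h$ satisfies the standing hypotheses. Since $\tilde h(x/\mu,\mu x^*)=h(x,x^*)<\inner{x}{x^*}+\varepsilon=\inner{x/\mu}{\mu x^*}+\varepsilon$, applying the first part to $\tilde h$ at $(x/\mu,\mu x^*)$ produces $(\tilde u,\tilde u^*)$ with $\tilde h(\tilde u,\tilde u^*)=\inner{\tilde u}{\tilde u^*}$, $\|\tilde u-x/\mu\|<\sqrt{\varepsilon}$ and $\|\tilde u^*-\mu x^*\|<\sqrt{\varepsilon}$. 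Setting $(\bar x_\lambda,\bar x^*_\lambda)=(\mu\tilde u,\tilde u^*/\mu)$ then gives $h(\bar x_\lambda,\bar x^*_\lambda)=\inner{\bar x_\lambda}{\bar x^*_\lambda}$, $\|\bar x_\lambda-x\|=\mu\|\tilde u-x/\mu\|<\mu\sqrt{\varepsilon}=\lambda$, and $\|\bar x^*_\lambda-x^*\|=\mu^{-1}\|\tilde u^*-\mu x^*\|<\sqrt{\varepsilon}/\mu=\varepsilon/\lambda$.

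The main obstacle I expect is arranging the iteration so that the final inequalities are \emph{strict} rather than merely nonstrict; this depends on two ingredients working in concert: the strict form of the distance estimate in Corollary~\ref{cr:pre.br} (available at every nontrivial step because $\alpha_n>0$ there), and the freedom to shrink the tolerances $\varepsilon_n$ so that the tail of the telescoping sum consumes strictly less than the slack $\sqrt{\varepsilon}-\sqrt{\alpha_0}$ already built in by the hypothesis $\alpha_0<\varepsilon$.
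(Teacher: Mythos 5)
Your proposal is correct and follows essentially the same route as the paper: iterate Corollary~\ref{cr:pre.br} to build a Cauchy sequence whose gaps telescope to a uniform bound strictly below $\sqrt{\varepsilon}$, pass to the limit and use lower semicontinuity of $h$ plus the standing inequality $h\ge\inner{\cdot}{\cdot}$ to get equality, and then obtain the $\lambda$--$\varepsilon/\lambda$ version by rescaling. The only cosmetic differences are that you use a freely chosen null sequence $(\varepsilon_n)$ with $\sum\sqrt{\varepsilon_n}<\sqrt{\varepsilon}-\sqrt{\alpha_0}$ where the paper takes the geometric schedule $\theta^k\varepsilon_0$ and then shrinks $\theta$, and that for the ``moreover'' part you rescale the function $h$ in its two arguments rather than renorm $X$, which are equivalent bookkeeping.
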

\begin{proof}
Let
\begin{equation} \label{eq:seq.eps}
\varepsilon_0=h(x,x^*)-\inner{x}{x^*}<\varepsilon.
\end{equation}
For an arbitrary $\theta\in(0,1)$, define inductively a sequence
$  \{(x_k,x^*_k)\}$
as follows:
For $k=0$, let 
\begin{equation}
  \label{eq:seq.0}
  (x_0,x^*_0)=(x,x^*).
\end{equation}
Given $k$ and $(x_k,x^*_k)$,  use Corollary~\ref{cr:pre.br} to conclude that
there exists some $(x_{k+1},x_{k+1}^*)$ such that
\begin{equation}
  h(x_{k+1},x^*_{k+1})-\inner{x_{k+1}}{x^* _{k+1}}  < 
  \theta^{k+1}\varepsilon_0
        \label{eq:seq.k1}
\end{equation}
and
\begin{equation}
        \label{eq:seq.k2}
\begin{array}{rcl}
  \normq{x_{k+1}-x_k}&\leq& h(x_{k},x^* _k)-\inner{x_k}{x^*_k},\\
  \normq{x_{k+1}^*-x_k^*}&\leq& h(x_{k},x^* _k)-\inner{x_k}{x^*_k}.
\end{array}
\end{equation}

Using \eqref{eq:seq.eps} and \eqref{eq:seq.k1} we conclude that for all
$k$,
  \begin{equation}
    \label{eq:pg}
  0\leq h(x_{k},x^*_{k})-\inner{x_{k}}{x^* _{k}}  < \theta^k\varepsilon_0.
  \end{equation}
  which, combined with \eqref{eq:seq.k2} yields
  \[
  \sum_{k=0} ^\infty \norm{x_{k+1}-x_k} <\sqrt{\varepsilon_0}
  \sum_{k=0} ^\infty \sqrt{\theta^k},\qquad
  \sum_{k=0} ^\infty \norm{x_{k+1}^*-x_k^*} <\sqrt{\varepsilon_0}
  \sum_{k=0} ^\infty \sqrt{\theta^k}.\qquad
  \]
  In particular, the sequences $\{x_k\}$ and $\{x_k^*\}$ are convergent. Let
  \[
  \bar x=\lim_{k\to\infty} x_k,\qquad
   \bar x^*=\lim_{k\to\infty} x_k^*.
   \]
  Then, using the previous equation we have
  \[ \norm{\bar x-x}<\frac{\sqrt{\varepsilon_0}}{1-\sqrt{\theta}},
  \qquad
   \norm{\bar x^*-x^*}<\frac{\sqrt{\varepsilon_0}}{1-\sqrt{\theta}}.
   \]
Since, by \eqref{eq:seq.eps}, $\varepsilon_0<\varepsilon$, for $\theta\in (0,1)$ sufficiently small,
 \[ \norm{\bar x-x}<\sqrt{\varepsilon},
  \qquad
   \norm{\bar x^*-x^*}<\sqrt{\varepsilon}.
   \]
   Using \eqref{eq:pg} we have
   \[ \lim_{k\to\infty} 
  h(x_{k},x^*_{k})-\inner{x_{k}}{x^* _{k}} =0.
  \]
 As $h$ is lower semicontinuous and the duality product is continuous,
 \[
 h(\bar x,\bar x^*)-\inner{\bar x}{\bar x^*}\leq 0.
 \]
  Therefore, $h(\bar x,\bar x^*)-\inner{\bar x}{\bar x^*}=0$, which ends
  the proof of the first part of the theorem.

  To prove the second part of the theorem, use in $X$ the norm
  \[ |||x|||=\frac{\sqrt\varepsilon}{\lambda}\;\norm{x},\]
  and apply the first part of the theorem in this re-normed space.
\end{proof}

\section{Main Result}
\label{sec:Main}
In this section we present our main result, Theorem~\ref{th:main}. Before that, we recall a well known result of theory of convex functions.

\begin{lemma}
  Let $E$ be a real topological linear space and $f:E\to\BR$ be a convex
  function. If $g:E\to\R$ is Gateaux differentiable at $x_0$, $f(x_0)=g(x_0)$ and
  $f\geq g$ in a neighborhood of $x_0$, then $g'(x_{0})\in\partial f(x_0)$.
  \label{lm:aux.dif}
\end{lemma}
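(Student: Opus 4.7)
The plan is, for arbitrary $y \in E$, to reduce the subgradient inequality
\[ f(y) \geq f(x_0) + \inner{y-x_0}{g'(x_0)} \]
to a one-dimensional statement along the segment from $x_0$ to $y$. Set $v = y-x_0$ and define $\phi(t)=f(x_0+tv)$, which is convex in $t\in\R$ with $\phi(0)=f(x_0)=g(x_0)\in\R$ finite. If $f(y)=+\infty$ the conclusion is trivial, so I may assume $\phi(1)\in\R$, and by convexity $\phi$ is finite on $[0,1]$.

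For all sufficiently small $t>0$, the point $x_0+tv$ lies in the neighborhood where $f\geq g$, so $\phi(t)\geq g(x_0+tv)$. Subtracting $\phi(0)=g(x_0)$ and dividing by $t>0$ yields
\[ \frac{\phi(t)-\phi(0)}{t} \;\geq\; \frac{g(x_0+tv)-g(x_0)}{t}. \]
As $t\to 0^+$, the right-hand side tends to $\inner{v}{g'(x_0)}\in\R$ by Gateaux differentiability of $g$, while the left-hand side increases to the one-sided derivative $\phi'_+(0)\in(-\infty,+\infty]$ (which exists for any convex $\phi$ with $\phi(0)$ finite, by monotonicity of difference quotients). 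Passing to the limit gives $\phi'_+(0)\geq \inner{v}{g'(x_0)}$, and in particular $\phi'_+(0)$ is finite.

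The final step invokes the same monotonicity of difference quotients in the other direction: for a convex function the ratio $(\phi(t)-\phi(0))/t$ is non-decreasing in $t>0$, so
\[ \phi(1)-\phi(0) \;\geq\; \phi'_+(0) \;\geq\; \inner{v}{g'(x_0)}, \]
which is exactly the desired subgradient inequality. I do not anticipate a serious obstacle: the local hypothesis $f\geq g$ is used only to obtain the lower bound on $\phi'_+(0)$, after which the global conclusion comes for free from one-dimensional convexity. The only mild subtlety is to note that $\phi'_+(0)>-\infty$ before applying the inequality $\phi(1)\geq\phi(0)+\phi'_+(0)$, which is precisely what the comparison with $g$ supplies.
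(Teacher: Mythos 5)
The paper states this lemma without proof, citing it as a well-known fact of convex analysis, so there is no author-supplied argument to compare against; your one-dimensional reduction along segments through $x_0$, using monotonicity of difference quotients, is the standard proof and is essentially correct.

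Two points deserve tightening. First, after disposing of the case $f(y)=+\infty$ you write ``so I may assume $\phi(1)\in\R$,'' but you have only excluded $+\infty$; the value $\phi(1)=f(y)=-\infty$ must also be ruled out, and indeed must be, for otherwise the subgradient inequality $f(y)\geq f(x_0)+\inner{y-x_0}{g'(x_0)}$ would simply be false and $g'(x_0)\notin\partial f(x_0)$. It is ruled out by the hypotheses: if $\phi(1)=-\infty$ then convexity together with finiteness of $\phi(0)$ forces $\phi(t)=-\infty$ for all $t\in(0,1)$, contradicting $\phi(t)\geq g(x_0+tv)>-\infty$ for small $t>0$. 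Second, a slip of direction: as $t\to 0^+$ the convex difference quotient $\bigl(\phi(t)-\phi(0)\bigr)/t$ \emph{decreases} (not increases) to $\phi'_+(0)$, and a priori $\phi'_+(0)\in[-\infty,+\infty]$, not $(-\infty,+\infty]$; the lower bound $\phi'_+(0)\geq\inner{v}{g'(x_0)}$, which you correctly extract from the comparison with $g$, is precisely what shows $\phi'_+(0)>-\infty$. With these two corrections the proof is complete.
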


\begin{theorem}
  \label{th:main}
  Suppose that $h:X\times X^*\to\BR $ is proper, convex, lower
  semicontinuous and
  \[
  \begin{array}{lcl}
     h\,(x,x^*)&\geq& \inner{x}{x^*} ,\quad \forall (x,x^*)\in X\times X^*\\
     h^*(x^*,x^{**})&\geq &\inner{x^*}{x^{**}} ,\quad \forall (x^*,x^{**})\in
     X^*\times X^{**}.
   \end{array}
   \]
   Define
   \[
   T=\{(x,x^*)\in X\times X^*\;|\;  h\,(x,x^*)= \inner{x}{x^*}\}.
   \]
   Then
   \begin{enumerate}
     \item $
   T=\{(x,x^*)\in X\times X^*\;|\;  h^*\,(x^*,x)= \inner{x}{x^*}\}
   $.
     \item $T$ is maximal monotone.
     \item Let $\varphi_T$ be the Fitzpatrick function associated with $T$, as
       defined in \eqref{FitzIntro}, that is, 
  \[ 
\varphi_T(x,x^*)=\sup_{(y,y^*)\in T} \inner{x}{y^*}+\inner{y}{x^*}
   -\inner{y}{y^*}.
   \]
   Then
\begin{equation}
\begin{array}{ccl}
  \varphi_T  (x,x^{*})&\geq&\inner{x}{x^{*}},
  \quad \forall (x,x^{*}) \in X\times X^{*}\\
\varphi_T ^* (x^*,x^{**})&\geq&\inner{x^*}{x^{**}},\quad \forall (x^*,x^{**})
\in X^*\times X^{**}.
\end{array}
\end{equation}
     \item The maximal monotone operator $T$ satisfies a \emph{strict Br\o nsted-Rockafellar property}:
     If $\eta > \varepsilon$ and $x^*\in T^\varepsilon(x)$, that is,
     \[ \inner{x-y}{x^*-y^*}\geq -\varepsilon ,\qquad \forall (y,y^*)\in
     T,
     \]
     then, for any $\lambda >0$ there exists $(\px_\lambda,\pv_\lambda)\in
     X\times X^*$ such that
     \[
     \pv_\lambda\in T(\px_\lambda), \qquad
     \|x-\px_\lambda\|< \lambda, \quad \|x^*-\pv_\lambda\|<\frac{\eta}{\lambda}.
     \]
   \end{enumerate}
\end{theorem}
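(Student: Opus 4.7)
The plan is to prove the four items in order: Lemma~\ref{lm:aux.dif} drives item (1) and the monotonicity part of (2); Corollary~\ref{cr:pre.br} together with Theorem~\ref{th:br.prel} drives maximality in (2); and Theorem~\ref{th:br.prel} applied to $\varphi_T$ delivers item (4).

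For item (1), apply Lemma~\ref{lm:aux.dif} with $f = h$ and $g$ the duality pairing on $X \times X^*$, which is Gateaux differentiable everywhere with gradient $(x^*, \hat x) \in X^* \times X^{**}$ at $(x, x^*)$ ($\hat x$ denotes the canonical image of $x$). For $(x, x^*) \in T$, the hypotheses $h \geq \pi$ globally and $h(x, x^*) = \inner{x}{x^*}$ yield $(x^*, \hat x) \in \partial h(x, x^*)$, and Fenchel--Young equality forces $h^*(x^*, \hat x) = \inner{x^*}{\hat x} = \inner{x}{x^*}$. The converse is symmetric: apply Lemma~\ref{lm:aux.dif} to $f = h^*$ and $g$ the pairing on $X^* \times X^{**}$ at $(x^*, \hat x)$ to get $(\hat x, \widehat{x^*}) \in \partial h^*(x^*, \hat x)$, then combine biconjugation $h^{**}(\hat x, \widehat{x^*}) = h(x, x^*)$ with Fenchel--Young equality for $h^*$ to conclude $h(x, x^*) = \inner{x}{x^*}$. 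For monotonicity in item (2), the inclusion $(x^*, \hat x) \in \partial h(x, x^*)$, together with its analogue for $(y, y^*) \in T$, gives via monotonicity of $\partial h$ (paired between $X \times X^*$ and $X^* \times X^{**}$) the inequality $2\inner{x - y}{x^* - y^*} \geq 0$.

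The main obstacle is maximality. Suppose for contradiction that $(x_0, x_0^*)$ is monotonically related to $T$ but $(x_0, x_0^*) \notin T$, and set $\alpha := h(x_0, x_0^*) - \inner{x_0}{x_0^*} > 0$. For each $\varepsilon > 0$, Corollary~\ref{cr:pre.br} with $(z, z^*) = (x_0, x_0^*)$ produces $(\tilde x, \tilde x^*)$ with $h(\tilde x, \tilde x^*) < \inner{\tilde x}{\tilde x^*} + \varepsilon$, $\normq{\tilde x - x_0} < \alpha$, $\normq{\tilde x^* - x_0^*} < \alpha$; the same construction (Theorem~\ref{th:0} applied to $h_{(x_0, x_0^*)}$ via Proposition~\ref{pr:prel}) records the sharper bound
\[
\inner{\tilde x - x_0}{\tilde x^* - x_0^*} + \tfrac12\normq{\tilde x - x_0} + \tfrac12\normq{\tilde x^* - x_0^*} < \varepsilon.
\]
Applying Theorem~\ref{th:br.prel} at $(\tilde x, \tilde x^*)$ with $\lambda = \sqrt\varepsilon$ yields $(\bar x, \bar x^*) \in T$ within $\sqrt\varepsilon$ of $(\tilde x, \tilde x^*)$ in each coordinate. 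Expanding the monotonicity relation $\inner{x_0 - \bar x}{x_0^* - \bar x^*} \geq 0$ through $(\tilde x, \tilde x^*)$ and bounding the three cross terms by $\sqrt{\alpha\varepsilon}$, $\sqrt{\alpha\varepsilon}$, and $\varepsilon$ respectively gives
\[
\inner{\tilde x - x_0}{\tilde x^* - x_0^*} \geq -2\sqrt{\alpha\varepsilon} - \varepsilon.
\]
Combined with the sharper estimate above, this forces $\normq{\tilde x - x_0} + \normq{\tilde x^* - x_0^*} < 4\varepsilon + 4\sqrt{\alpha\varepsilon}$, so $(\tilde x, \tilde x^*) \to (x_0, x_0^*)$ in norm as $\varepsilon \to 0$. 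Lower semicontinuity of $h$ together with $h(\tilde x, \tilde x^*) < \inner{\tilde x}{\tilde x^*} + \varepsilon$ then yields $h(x_0, x_0^*) \leq \inner{x_0}{x_0^*}$, contradicting $\alpha > 0$. The delicate balancing of the parameters $\alpha, \varepsilon, \lambda$ together with the precise use of monotonicity is where I expect the main technical care is needed.

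Item (3) now follows: $\varphi_T \in \F_T$ (Theorem~\ref{th:fitz}) gives $\varphi_T \geq \pi$; combining $h \in \F_T$ with the minimality of $\varphi_T$ in this family gives $\varphi_T \leq h$, so $\varphi_T^* \geq h^* \geq \pi$ on $X^* \times X^{**}$. For item (4), $x^* \in T^\varepsilon(x)$ translates (via the definition of $\varphi_T$ as a supremum over $T$) into $\varphi_T(x, x^*) \leq \inner{x}{x^*} + \varepsilon < \inner{x}{x^*} + \eta$. Since $\varphi_T$ satisfies the hypotheses of Theorem~\ref{th:br.prel} by item (3), applying that theorem at $(x, x^*)$ with slack $\eta$ yields, for every $\lambda > 0$, a point $(\px_\lambda, \pv_\lambda)$ with $\varphi_T(\px_\lambda, \pv_\lambda) = \inner{\px_\lambda}{\pv_\lambda}$, $\norm{\px_\lambda - x} < \lambda$, and $\norm{\pv_\lambda - x^*} < \eta/\lambda$. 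Since $\varphi_T \in \F_T$ characterizes $T$ through this equality (Theorem~\ref{th:fitz}), we obtain $\pv_\lambda \in T(\px_\lambda)$, as required.
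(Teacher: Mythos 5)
Your proposal is correct and follows essentially the same route as the paper: Lemma~\ref{lm:aux.dif} plus Fenchel--Young and biconjugation for item~(1) and monotonicity; Theorem~\ref{th:0}/Corollary~\ref{cr:pre.br} translated to $(x_0,x_0^*)$ combined with Theorem~\ref{th:br.prel} for maximality; the extremality of $\varphi_T$ in $\F_T$ for item~(3); and Theorem~\ref{th:br.prel} applied to $\varphi_T$ with slack $\eta$ for item~(4).

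The only place where you deviate from the paper is the internal organization of the maximality step. You run it as a proof by contradiction ($\alpha>0$), expand the monotonicity inequality $\inner{x_0-\bar x}{x_0^*-\bar x^*}\geq 0$ through $(\tilde x,\tilde x^*)$, and extract an explicit distance estimate $\|\tilde x-x_0\|^2+\|\tilde x^*-x_0^*\|^2 < 4\varepsilon + 4\sqrt{\alpha\varepsilon}$ before invoking lower semicontinuity. The paper instead argues directly: it fixes a minimizing sequence $(u_k,u_k^*)$ for $h_{(z,z^*)}+\tfrac12\|\cdot\|^2+\tfrac12\|\cdot\|^2$, observes it is bounded, uses Theorem~\ref{th:br.prel} to produce nearby points of $T$, and then a $\liminf$/$\limsup$ argument on $\inner{u_k}{u_k^*}$ (via uniform continuity of the duality product on bounded sets) to show $(u_k,u_k^*)\to 0$. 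Both variants hinge on the same tools and both implicitly use finiteness of $h$ at the test point to get the relevant boundedness (your cross-term bound $\sqrt{\alpha\varepsilon}$ is vacuous if $\alpha=\infty$, just as the paper's bound $\|u_k\|^2\leq h_{(z,z^*)}(0,0)$ is vacuous there); this can be repaired in either version by retaining the cross terms $\|a\|\sqrt\varepsilon$, $\|b\|\sqrt\varepsilon$ and completing the square, but it is not a defect specific to your proof. Your estimate is a bit more explicit, the paper's is a bit more modular; they are of comparable length and difficulty.
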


\begin{proof}
To prove item 1, denote by $\pi:X\times X^*\to \R$ the duality product.
This function is everywhere differentiable and
\[ \pi'(x,x^*)=(x^*,x).\]
Suppose that $h(x,x^*)=\inner{x}{x^*}=\pi(x,x^*)$. Then, by Lemma~\ref{lm:aux.dif}
\[
 (x^*,x)\in\partial h(x,x^*), \quad \mbox{that is}, \quad h(x,x^*)+h^*(x^*,x)
 =\Inner{(x,x^*)}{(x^*,x)},
\]
which implies $h^*(x^*,x)=\inner{x}{x^*}$. Conversely, if $h^*(x^*,x)=
\inner{x}{x^*}$, then by the same reasoning $h^{**}(x,x^*)=\inner{x}{x^*}$.
As $h$ is proper, convex and lower semicontinuous, $h(x,x^*)=h^{**}(x,x^*)$, which
concludes the proof of item 1.

Take $(x,x^*), (y,y^*)\in T$. Then, as proved above
\[ (x^*,x)\in\partial h(x,x^*),\qquad
 (y^*,y)\in\partial h(y,y^*).
\]
As $\partial h$ is monotone,
\[ \Inner{(x,x^*)-(y,y^*)}{(x^*,x)-(y^*,y)}\geq 0,
\]
which gives $\inner{x-y}{x^*-y^*}\geq 0$. Hence, $T$ is monotone.

To prove maximal monotonicity of $T$, take $(z,z^*)\in X\times X^*$
and assume that
\begin{equation}
   \label{eq:te}
  \inner{x-z}{x^*-z^*}\geq 0 ,\qquad \forall (x,x^*)\in T.
\end{equation}
Using Theorem~\ref{th:0} and Proposition~\ref{pr:prel}  we know that
\[
 \inf h_{(z,z^*)} (u,u^*)+\frac{1}{2}\normq{u}+\frac{1}{2}\normq{u^*}
=0.
\]
Therefore, there exists a minimizing sequence
$\{(u_k,u^* _k)\}$ such that
\begin{equation}
    \label{eq:seq.min}
   h_{(z,z^*)}(u_k,u^*_k)+\frac{1}{2}\normq{u_k}+
   \frac{1}{2}\normq{u^*_k}<  \frac{1}{k^2}
   ,\qquad k=1,2,\dots
   \end{equation}
   Note that the sequence $\{(u_k,u^*_k)\}$ is bounded and 
   \begin{eqnarray*}
   h_{(z,z^*)}(u_k,u^*_k)- \inner{u_k}{u^*_k}
   &\leq &
   h_{(z,z^*)}(u_k,u^*_k)+ \norm{u_k}\,\norm{u^*_k}\\
   &\leq&         h_{(z,z^*)}(u_k,u^*_k)+\frac{1}{2}\normq{u_k}+\frac{1}{2}\normq{u^*_k}.
\end{eqnarray*}
Combining the two above inequalities we obtain
\[ h_{(z,z^*)}(u_k,u^*_k)<\inner{u_k}{u^*_k}+\frac{1}{k^2}.
\]
Now applying Theorem~\ref{th:br.prel}, we conclude that there for each $k$ there exists some
$(\bar u_k,\bar u^*_k)$ such that
\[
h_{(z,z^*)}(\bar u_k,\bar u^*_k)=\inner{\bar u_k}{\bar u^*_k},
\quad \norm{\bar u_k-u_k}<1/k,
\quad\norm{\bar u^*_k-u^*_k}<1/k.
\]
Then,
\[ (\bar x_k,\bar x_k ^*):=(\bar u_k+z,\bar u^*_k+z^*)\in T,\]
and from \eqref{eq:te}
\[ \inner{\bar u_k}{\bar u^*_k}=\inner{\bar x_k-z}{\bar x^*_k-z^*}\geq 0.
\]
The duality product is uniformly continuous on bounded sets. Since $\{(u_k,u^*_k)\}$
is bounded and $\lim_{k\to\infty}\norm{u_k-\bar u_k}=
\lim_{k\to\infty}\norm{u^* _k-\bar u^* _k}= 0$ we conclude that
\[ \liminf_{k\to\infty}\;\inner{u_k}{u^*_k}\geq 0.\] Using
\eqref{eq:seq.min} and the fact that $h$ majorizes the duality
product, we have
\[ 0\leq \inner{u_k}{u^*_k}+
   \frac{1}{2}\normq{u_k}+\frac{1}{2}\normq{u^*_k}\leq
   h_{(z,z^*)}(u_k,u^*_k)+\frac{1}{2}\normq{u_k}+\frac{1}{2}\normq{u^*_k}<  \frac{1}{k^2}.
\]
Hence, $\inner{u_k}{u^*_k}<1/k^2\; $ and
$\lim\sup_{k\to\infty}\;\inner{u_k}{u^*_k}\;\leq 0$, which implies
$\lim_{k\to\infty}\;\inner{u_k}{u^*_k}= 0$. Combining this result with
the above inequalities we conclude that
\[
\lim_{k\to\infty}(u_k,u^*_k)=0.
\]
Therefore, $\lim_{k\to\infty}\; (\bar u_k,\bar u^*_k)=0$ and $\{(\bar x_k, \bar x^*_k)\}$ converges to
$(z,z^*)$. As $h(\bar x_k,\bar x_k ^*)=\inner{\bar x_k}{\bar x^*_k}$ and $h$ is
lower semicontinuous,
\[
h(z,z^*)\leq \inner{z}{z^*}.
\]
which readily implies $h(z,z^*)=\inner{z}{z^*}$. Therefore $(z,z^*)\in T$ and $T$ is
maximal monotone.

For proving item 3, note that as $T$ is maximal monotone, Fitzpatrick
function $\varphi_T$ is minimal in the family of
functions which majorizes the duality product and at $T$ are equal to
the duality product.  In particular, the first inequality in item 3
holds and $h\geq \varphi_T$. Hence,
\[
\varphi_T^*\geq h^*,
\]
which readily implies the second inequality in item 3.

For proving item 4, assume that $\eta > \varepsilon>0$ and 
\[ \inner{x-y}{x^*-y^*}\geq -\varepsilon ,\qquad \forall (y,y^*)\in
T.
\]
Fitzpatrick function of $T$ is
\begin{eqnarray*}
\varphi_T(x,x^*)&=&\sup_{(y,y^*)\in T} \inner{x}{y^*}+\inner{y}{x^*}
   -\inner{y}{y^*}\\
   &=&\sup_{(y,y^*)\in T} -\inner{x-y}{x^*-y^*}+\inner{x}{x^*}.
\end{eqnarray*}
Therefore
\[ \varphi_T(x,x^*)\leq \inner{x}{x^*}+\varepsilon
<\inner{x}{x^*}+\eta.
\]
Now, use item 3 and Theorem~\ref{th:br.prel} to conclude that there exists $(\bar x_\lambda,
\bar x^*_\lambda)$ such that
\[
\varphi_T(\bar x_\lambda,\bar x^*_\lambda)=\inner{\bar x_\lambda}{\bar x^*_\lambda},
\qquad
\|x-\bar x_\lambda\|< \lambda, \quad \|x^*-\bar x^*_\lambda\|<\frac{\eta}{\lambda}.
\]
The firs equality above says that $(\bar x_\lambda,\bar x^*_\lambda)\in T$, which ends the
proof of the theorem.
\end{proof}

\begin{corollary}
  \label{cr:th.main}
  Let $T:X\tos X^*$ be maximal monotone. If there exists $h \in \F_T$, that is, $h:X\times X^*\to
  \BR$ proper, convex, lower semicontinuous and
  \[ h(x,x^*)\geq \inner{x}{x^*} \quad \forall (x,x^*)\in X\times X^*\]
  with equality in $(x,x^*)\in T$, such that
  \[
  h^*(x^*,x^{**})\geq \inner{x^*}{x^{**} } \quad \forall (x^*,x^{**})
  \in X^*\times X^{**},
  \]
  then, $T$ has the \emph{strict Br\o nsted-Rockafellar property} and
  the conjugate of $\varphi_T$, the Fitzpatrick function associated to $T$, 
  majorizes the duality product in $X^*\times X^{**}$
  \[
  \varphi_T^*(x^*,x^{**})\geq \inner{x^*}{x^{**} } \quad \forall (x^*,x^{**})
  \in X^*\times X^{**}.
  \]
\end{corollary}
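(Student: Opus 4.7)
The plan is to reduce this to a direct application of Theorem~\ref{th:main} applied to the given $h$. Let
\[
S = \{(x,x^*)\in X\times X^*\;|\; h(x,x^*)=\inner{x}{x^*}\}.
\]
Since $h$ satisfies all hypotheses of Theorem~\ref{th:main}, that theorem yields at once that $S$ is maximal monotone, that $S$ has the strict Br\o nsted-Rockafellar property, and that $\varphi_S^*$ majorizes the duality product on $X^*\times X^{**}$.

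Next I would identify $S$ with $T$. By the defining property of the Fitzpatrick family, $h\in\F_T$ implies $h(x,x^*)=\inner{x}{x^*}$ whenever $(x,x^*)\in T$, so $T\subseteq S$. But $T$ is maximal monotone and $S$ is monotone, hence $T=S$. Consequently $T$ inherits the strict Br\o nsted-Rockafellar property from item 4 of Theorem~\ref{th:main}, and $\varphi_T=\varphi_S$ since the Fitzpatrick function depends only on the graph of the operator. The conjugate inequality for $\varphi_T$ then follows from item 3 of Theorem~\ref{th:main}.

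There is essentially no obstacle here: the work has already been done in Theorem~\ref{th:main}, and the corollary only requires recognizing that the operator $S$ associated with $h$ coincides with $T$. The only subtlety worth spelling out in the write-up is the use of the maximality of $T$ together with the monotonicity of $S$ to conclude $T=S$ from the inclusion $T\subseteq S$, and then noting that both conclusions desired in the corollary are properties of the common operator $T=S$ that have already been established for $S$ in Theorem~\ref{th:main}.
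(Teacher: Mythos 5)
Your proof is correct and matches the intended argument: apply Theorem~\ref{th:main} to $h$ and identify the resulting operator with $T$. One small simplification worth noting is that Theorem~\ref{th:fitz} already gives $(x,x^*)\in T\iff h(x,x^*)=\inner{x}{x^*}$ for any $h\in\F_T$, so $S=T$ follows directly without needing to invoke the maximality-of-$T$-plus-monotonicity-of-$S$ argument, though that route is also valid.
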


The duality product is continuous in $X\times X^*$. Therefore, if a
convex function  majorizes the duality product then
the convex closure of this function also majorizes
the duality product and has the same conjugate. This fact can be used
to remove the assumption of lower semicontinuity of $h$ in
Theorem~\ref{th:main}.
\begin{corollary}
  \label{br}
  Suppose that $h:X\times X^*\to\BR $ is convex and 
  \[
  \begin{array}{lcl}
     h\,(x,x^*)&\geq& \inner{x}{x^*} , \quad \forall (x,x^*)\in X\times X^*\\
     h^*(x^*,x^{**})&\geq &\inner{x^*}{x^{**}} ,\quad \forall (x^*,x^{**})\in
     X^*\times X^{**}.
   \end{array}
   \]
   Define
   \[
   T=\{(x,x^*)\in X\times X^*\;|\;  h^*\,(x^*,x)= \inner{x}{x^*}\}.
   \]
   Then
   \begin{enumerate}
   \item $T$ is maximal monotone.
    \item Let $\varphi_T$ be Fitzpatrick function associated with $T$.
     Then
     \begin{equation}
       \begin{array}{ccl}
         \varphi_T  (x,x^{*})&\geq&\inner{x}{x^{*}},
         \quad \forall (x,x^{*}) \in X\times X^{*}\\
         \varphi_T ^* (x^*,x^{**})&\geq&\inner{x^*}{x^{**}},\quad \forall (x^*,x^{**})
         \in X^*\times X^{**}.
       \end{array}
     \end{equation}
    \item
     The maximal monotone operator $T$ satisfies a \emph{strict Br\o nsted-Rockafellar property}:
     If $\eta > \varepsilon$ and $x^*\in T^\varepsilon(x)$, that is,
     \[ \inner{x-y}{x^*-y^*}\geq -\varepsilon ,\qquad \forall (y,y^*)\in
     T,
     \]
     then, for any $\lambda >0$ there exists $(\px_\lambda,\pv_\lambda)\in
     X\times X^*$ such that
     \[
     \pv_\lambda\in T(\px_\lambda), \qquad
     \|x-\px_\lambda\|< \lambda, \quad \|x^*-\pv_\lambda\|<\frac{\eta}{\lambda}.
     \]
   \end{enumerate}
\end{corollary}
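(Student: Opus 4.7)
The plan is to reduce to Theorem~\ref{th:main} by passing from $h$ to its lower semicontinuous hull $\bar h := \mathrm{cl}\, h$, exactly as the paragraph preceding the corollary suggests. First, $h$ is automatically proper: the hypothesis $h^*(x^*,x^{**}) \ge \inner{x^*}{x^{**}}$ rules out $h^* \equiv -\infty$, which forces $h$ to take a finite value somewhere, while $h \ge \inner{\cdot}{\cdot}$ prevents $h$ from taking the value $-\infty$. Hence $\bar h$ is proper, convex and lower semicontinuous.

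Next I would verify that $\bar h$ still satisfies both majorization hypotheses. For the primal inequality, joint norm-continuity of the duality product on $X\times X^*$ gives
\[
\bar h(x,x^*) = \liminf_{(y,y^*) \to (x,x^*)} h(y,y^*) \ge \liminf_{(y,y^*) \to (x,x^*)} \inner{y}{y^*} = \inner{x}{x^*}.
\]
For the dual inequality, the standard identity $(\mathrm{cl}\, h)^* = h^*$ for convex $h$ gives $\bar h^* = h^*$, so the assumption on $h^*$ transfers verbatim to $\bar h^*$.

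Theorem~\ref{th:main} therefore applies to $\bar h$, yielding that
\[
\tilde T := \{(x,x^*) \in X\times X^*  :  \bar h(x,x^*) = \inner{x}{x^*}\}
\]
is maximal monotone, has the asserted Fitzpatrick majorization properties, and satisfies the strict Br\o nsted-Rockafellar property. To match the definition of $T$ given in the corollary, I would invoke item~1 of Theorem~\ref{th:main} applied to $\bar h$, which gives $\tilde T = \{(x,x^*) : \bar h^*(x^*, x) = \inner{x}{x^*}\} = \{(x,x^*) : h^*(x^*, x) = \inner{x}{x^*}\} = T$. Items~1, 2 and~3 of the corollary then follow directly from items~2, 3 and~4 of Theorem~\ref{th:main}.

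The only point requiring genuine care is the $\liminf$ computation above, which relies crucially on joint norm-continuity of the duality product on the product $X\times X^*$ rather than on mere separate continuity in each variable; everything else is bookkeeping around the identity $\bar h^* = h^*$, so I expect no serious obstacle.
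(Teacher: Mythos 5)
Your proof is correct and follows exactly the route the paper indicates in the remark preceding the corollary: pass to the lower semicontinuous hull $\bar h$, check that it still majorizes the duality product (via joint continuity of $\inner{\cdot}{\cdot}$) and that $\bar h^* = h^*$, then apply Theorem~\ref{th:main} and use its item~1 to identify $T$ with the set defined by $\bar h$. The properness argument and the observation that the corollary's definition of $T$ via $h^*$ is precisely what makes the reduction work are the right details to spell out.
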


\section{Acknowledgments}

We thanks the anonymous referee for the positive criticism and
corrections of the original version of this work. 
\bibliographystyle{plain}

\end{document}